\chardef\bslash=`\\ 
\newtheorem{thm}{Theorem}[section]
\newtheorem{cor}[thm]{Corollary}
\newtheorem{lem}[thm]{Lemma}
\newtheorem{conj}[thm]{Conjecture}
\theoremstyle{definition}
\newtheorem{rem}{Remark}[section]
\theoremstyle{remark}
\def\ZZ{\mathbb{Z}}
\def\RR{\mathbb{R}}
\newcommand{\eval}[2][\right]{\relax
  \ifx#1\right\relax \left.\fi#2#1\rvert}
\begin{document}
\title{Improved bounds on the diameter of lattice~polytopes}

\author[Antoine Deza]{Antoine Deza}
\address{McMaster University, Hamilton, Ontario, Canada}
\email{deza@mcmaster.ca} 

\author[Lionel Pournin]{Lionel Pournin}
\address{LIPN, Universit{\'e} Paris 13, Villetaneuse, France}
\email{lionel.pournin@univ-paris13.fr} 

\begin{abstract}
We show that the largest possible diameter $\delta(d,k)$ of a $d$-dimensional polytope whose vertices have integer coordinates ranging between $0$ and $k$ is at most $kd - \lceil2d/3\rceil$ when $k\geq3$. In addition, we show that $\delta(4,3)=8$. This substantiates the conjecture whereby $\delta(d,k)$ is at most $\lfloor(k+1)d/2\rfloor$ and is achieved by a Minkowski sum of lattice vectors.
\end{abstract}
\maketitle

\section{Introduction}

The convex hull of a set of points with integer coordinates is called a \emph{lattice polytope}. If all the vertices of a lattice polytope are drawn from $\{0,1,\dots,k\}^d$, it is referred to as a \emph{lattice $(d,k)$-polytope}. The diameter of a polytope $P$, denoted by $\delta(P)$, is the diameter of its graph. The quantity we are interested in is the largest possible diameter $\delta(d,k)$ of a lattice $(d,k)$-polytope.

At the end of the 1980's, Naddef~\cite{Naddef1989} showed that $\delta(d,1)=d$. A consequence of this result is that all lattice $(d,1)$-polytopes satisfy the Hirsch bound: their diameter is at most the number of their facets minus their dimension. While polytopes violating the Hirsch bound have been found by Santos~\cite{Santos2012}, many questions related with the diameter of polytopes, and more generally with the combinatorial, geometric, and algorithmic aspects of linear optimization remain open. Related recent results include the successive tightening by Todd~\cite{Todd2014} and Sukegawa~\cite{Sukegawa2016} of the upper bound on the diameter of polytopes due to Kalai and Kleitman~\cite{KalaiKleitman1992}, a counterexample to a continuous analogue of the polynomial Hirsch conjecture by Allamigeon, Benchimol, Gaubert, and Joswig~\cite{AllamigeonBenchimolGaubertJoswig2014}, and the validation that transportation polytopes satisfy the Hirsch bound by Borgwardt, De Loera, and Finhold~\cite{BorgwardtDeLoeraFinhold2016}. For additional related results, we refer the reader to~\cite{AllamigeonBenchimolGaubertJoswig2014,BonifasDiSummaEisenbrandHahnleNiemeier2014,BorgwardtDeLoeraFinhold2016,Santos2012,Sukegawa2016,Todd2014} and references therein.

The result of Naddef was generalized in the beginning of the 1990's by Kleinschmidt and Onn~\cite{KleinschmidtOnn1992} who proved that $\delta(d,k)\leq kd$. In a recent article, Del Pia and Michini~\cite{DelPiaMichini2016} strenghtened this bound to $\delta(d,k) \leq kd - \lceil d/2\rceil$ when $k\geq 2$, and showed that $\delta(d,2)=\lfloor3d/2\rfloor$.
Pursuing the approach introduced by Del Pia and Michini, we prove the following upper bound.
\begin{thm}\label{UB+}
$\delta(d,k) \leq kd - \lceil 2d/3\rceil$ when $k\geq 3$.
\end{thm}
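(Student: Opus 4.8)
The plan is to extend Del Pia and Michini's method: bound the length of a shortest path in the graph of an extremal polytope, but arrange the bookkeeping so that every three coordinates---rather than every two---yield a saving of two units over the trivial count $kd$. Fix a lattice $(d,k)$-polytope $P$ with $\delta(P)=\delta(d,k)$, choose vertices $u$ and $v$ of $P$ at graph distance $\delta(d,k)$, and fix a shortest path $\Gamma=(x_0,\dots,x_\delta)$ in the graph of $P$ with $x_0=u$ and $x_\delta=v$, so that $\delta=\delta(d,k)$. Reflecting the $j$-th coordinate ($x^{(j)}\mapsto k-x^{(j)}$) is a symmetry of $\{0,\dots,k\}^d$ that sends $P$ to a lattice $(d,k)$-polytope and $\Gamma$ to a shortest path, so after reflections we may assume $u^{(j)}\leq v^{(j)}$ for every $j$ while keeping $P\subseteq\{0,\dots,k\}^d$.

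The first ingredient is a collection of constraints on $\Gamma$ forced by minimality of its length. The basic no-shortcut principle is that if two vertices $x_a$ and $x_b$ of $\Gamma$ both lie on a face $F$ of $P$ cut out by fixing some of the coordinates to the value $0$ or to the value $k$, then $b-a\leq\delta(\dim F,k)$, because such an $F$ is itself a lattice polytope in a box of width $k$ and its graph is an induced subgraph of the graph of $P$ (a shorter path between $x_a$ and $x_b$ inside $F$ could be spliced into $\Gamma$). Combining this with the box constraint $x_i^{(j)}\in\{0,\dots,k\}$, one gets, for each coordinate $j$, control over how the integer sequence $x_0^{(j)},\dots,x_\delta^{(j)}$ rises, falls, and backtracks---enough to charge every edge of $\Gamma$ to a pair $(j,\ell)$ with $\ell\in\{1,\dots,k\}$ so that no pair is charged by two different edges. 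Since there are only $kd$ such pairs this recovers the Kleinschmidt--Onn bound $\delta\leq kd$.

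The heart of the argument is a refinement showing that at least $\lceil 2d/3\rceil$ of the $kd$ pairs are never charged. I would partition the coordinates into $\lfloor d/3\rfloor$ triples plus a leftover block of $r\in\{0,1,2\}$ coordinates, and argue that within each triple the path cannot be simultaneously ``saturated'' in all three coordinates: a triple of coordinates each traversed through its full range $\{0,\dots,k\}$ would force $\Gamma$ to leave and re-enter a face of $P$ on which the no-shortcut principle already bounds the relevant sub-path, and quantifying this yields two uncharged pairs per triple, the leftover block together with the rounding supplying the remaining $\lceil 2d/3\rceil-2\lfloor d/3\rfloor$. The same accounting can also be packaged as the recursion $\delta(d,k)\leq\delta(d-3,k)+3k-2$ for $d\geq 4$, which telescopes to the claimed bound from the base cases $\delta(1,k)=1\leq k-1$, $\delta(2,k)\leq 2k-2$ (a direct estimate on the number of vertices of a lattice polygon in $\{0,\dots,k\}^2$), and $\delta(3,k)\leq 3k-2$ (the case $d=3$ of Del Pia and Michini's theorem)---all valid precisely when $k\geq 3$.

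I expect the combinatorial step of the previous paragraph to be the real obstacle: three coordinates can interleave along $\Gamma$ in many patterns, and excluding the ones that would defeat the ``two units per triple'' accounting requires a careful case analysis coupling the bounds $0\leq x_i^{(j)}\leq k$ with the minimality of $\Gamma$. It is also there, and in the base estimate $\delta(2,k)\leq 2k-2$, that the hypothesis $k\geq 3$ is used: with only $k\leq 2$ levels per coordinate there is no slack for the saving, consistent with the fact that for $k=2$ the Del Pia--Michini bound $kd-\lceil d/2\rceil$ is already attained, since $\delta(d,2)=\lfloor 3d/2\rfloor$.
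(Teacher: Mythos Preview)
Your proposal is a plan rather than a proof, and the plan diverges from what the paper actually does in a way that leaves the hardest step unsupported. You fix a shortest path $\Gamma$ and propose a charging scheme on coordinate--level pairs $(j,\ell)$, then assert that grouping coordinates into triples forces two uncharged pairs per triple. But the charging itself is never specified: Kleinschmidt--Onn's $\delta\leq kd$ is \emph{not} proved by charging edges of a geodesic to pairs $(j,\ell)$, and there is no evident injection from edges of $\Gamma$ into such pairs (an edge can change several coordinates at once, or none by more than one, and a coordinate can oscillate). You acknowledge that the ``two units per triple'' step is ``the real obstacle'' and leave it as a case analysis to be done; as written, nothing prevents all three coordinates in a triple from being saturated along $\Gamma$. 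The alternative packaging as a single recursion $\delta(d,k)\leq\delta(d-3,k)+3k-2$ is likewise unproved, and in fact the paper does \emph{not} establish that single inequality.

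The paper's route is structurally different. It never fixes a geodesic. Instead it proves a face lemma (Lemma~\ref{Lem.ADLP.1}) valid for index sets $I$ of size up to $3$: for vertices $u,v$ with $u_i+v_i\leq k$ on $I$, one has $d(u,v)\leq\delta(d-|I|,k)+\sum_{i\in I}(u_i+v_i)$, obtained by pushing both $u$ and $v$ toward the face minimizing $\sum_{i\in I}x_i$ and bounding that face via Lemma~\ref{Lem.ADLP.0}. A delicate case analysis (Lemmas~\ref{Lem.ADLP.3} and~\ref{Lem.ADLP.4}, using neighbours of $u$ and simplex-type paths to the face $\{x_{i_1}+x_{i_2}=0\}$, plus the polygon Lemma~\ref{Lem.ADLP.1.5}) then shows that for \emph{every} pair $u,v$ at least one of
\[
d(u,v)\leq\delta(d-1,k)+k-1,\quad
d(u,v)\leq\delta(d-2,k)+2k-2,\quad
d(u,v)\leq\delta(d-3,k)+3k-2
\]
holds (Theorem~\ref{Theo}). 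Each of the three alternatives feeds the same induction to give $kd-\lceil 2d/3\rceil$, with base cases $d\leq3$ handled by $\delta(d,k)\leq\delta(1,k)+(d-1)k=kd-(k-1)$ and $k-1\geq\lceil 2d/3\rceil$ for $k\geq3$, $d\leq3$. If you want to salvage your outline, the concrete missing ingredient is exactly this trichotomy; a direct charging argument on a fixed geodesic does not seem to produce it.
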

We slightly refine Theorem \ref{UB+}.
\begin{thm}\label{UB++}
The following inequalities hold:
\begin{enumerate}[label=$(\roman*)$]
\item $\delta(d,k)\leq{kd-\lceil2d/3\rceil-(k-2)}$ when $k\geq4$ ,
\item $\delta(d,3)\leq\lfloor7d/3\rfloor-1$ when $d\not\equiv2\mod{3}$,
\item $\delta(d,3)\leq\lfloor7d/3\rfloor$ when $d\equiv2\mod{3}$.
\end{enumerate}
\end{thm}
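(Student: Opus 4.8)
The plan is to build on the argument behind \thmref{UB+} and feed it sharper low-dimensional input. That argument studies a shortest path $u=p^{0},p^{1},\dots,p^{\delta(P)}=v$ in the graph of a lattice $(d,k)$-polytope $P$; building on the Kleinschmidt and Onn bound $\delta(d,k)\le kd$, one extracts a saving of $2$ from every triple of coordinates by treating the three coordinates of a triple together, which is precisely what produces the term $\lceil 2d/3\rceil$. Organised as an induction that removes three coordinates at a time, this amounts to the recursion
\[
\delta(d,k)\le\delta(d-3,k)+(3k-2)\qquad(d\ge 4,\ k\ge 3),
\]
so that $\delta(d,k)$ is controlled by roughly $(3k-2)$ per triple of coordinates, together with a boundary term given by $\delta(r,k)$ in the residual dimension $r\in\{1,2,3\}$ with $r\equiv d\pmod 3$. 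In \thmref{UB+} that boundary term is used only in the weak form $\delta(r,k)\le kr-\lceil 2r/3\rceil$; the entire content of \thmref{UB++} is that it can be improved by $k-2$.

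The first, and main, step is therefore to establish the sharp estimates
\[
\delta(1,k)=1,\qquad \delta(2,k)\le k\ \ (k\ge 4),\qquad \delta(3,k)\le 2k\ \ (k\ge 3).
\]
The first is immediate, a lattice $(1,k)$-polytope being a segment. For the second, $\delta(2,k)$ is essentially half the largest number of vertices of a lattice polygon inscribed in a $k\times k$ square, so the task is to show that such a polygon has at most $2k+1$ vertices once $k\ge 4$; note that this fails for $k\le 3$, since already an octagon fits in the $3\times 3$ square, and this is exactly the source of the exceptional behaviour at $k=3$. The third estimate, $\delta(3,k)\le 2k$, is the crux: one must bound a shortest path in a lattice $(3,k)$-polytope, and I would attempt this by pushing the coordinatewise arguments to their limit in dimension $3$ while tracking how the three coordinates interact, using that the first and last edges of the path issue from the genuine vertices $u$ and $v$ of $P$. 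I expect this dimension-$3$ bound to be the real obstacle; should a clean argument not materialise, I would settle the smallest cases directly — in particular $\delta(3,3)\le 6$ — by a finite case analysis or with computer assistance, and then propagate in $k$.

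The final step is bookkeeping. Given $d$, set $r\in\{1,2,3\}$ with $r\equiv d\pmod 3$, start from $\delta(r,k)$ in its sharp form, and apply the recursion $\delta(d,k)\le\delta(d-3,k)+(3k-2)$ down to dimension $r$; in every residue class the base values $\delta(1,k)=1$, $\delta(2,k)\le k$, $\delta(3,k)\le 2k$ yield $\delta(d,k)\le kd-\lceil 2d/3\rceil-(k-2)$, which is $(i)$, and specialising to $k=3$ gives $(ii)$ for $d\not\equiv 2\pmod 3$, in particular $\delta(3,3)\le 6$ and $\delta(4,3)\le 8$. This scheme loses its improvement in precisely one situation: $k=3$ with $d\equiv 2\pmod 3$, where the residual block has dimension $2$ and only $\delta(2,3)\le 4$, rather than $\delta(2,3)\le 3$, is available; the gain of $k-2=1$ then evaporates and the recursion returns the bound $\lfloor 7d/3\rfloor$ already given by \thmref{UB+}, which is $(iii)$.
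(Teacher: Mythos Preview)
Your plan is in essence the paper's own proof: sharpen the low-dimensional input and rerun the induction behind \thmref{UB+}. Two points, however, need correction.

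First, the inductive engine is not the single recursion $\delta(d,k)\le\delta(d-3,k)+(3k-2)$. What \thmref{Theo} actually provides is a trichotomy: for each pair $(u,v)$ one of the three inequalities $d(u,v)\le\delta(d-1,k)+k-1$, $d(u,v)\le\delta(d-2,k)+2k-2$, $d(u,v)\le\delta(d-3,k)+3k-2$ holds, and which one is not under your control. Your ``go down by three'' recursion is not established anywhere, and your bookkeeping as written would fail if, say, case $(i)$ fires. The fix is easy but must be done: carry the target bound $f(d)$ through the induction and check that each branch of the trichotomy yields $f(d)$ from $f(d-1)$, $f(d-2)$, or $f(d-3)$ respectively. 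This is what the paper does, and the arithmetic goes through for all three branches in each of $(i)$, $(ii)$, $(iii)$.

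Second, you have badly misjudged where the difficulty lies. The bound $\delta(3,k)\le 2k$ is not ``the crux'' and does not require pushing coordinatewise arguments, a case analysis, or a computer: for $k\ge 4$ it drops out of $\delta(2,k)\le k$ (this is \thmref{2D}, a known result you need not reprove) together with the trivial Kleinschmidt--Onn step $\delta(3,k)\le\delta(2,k)+k$, which is \lemref{Lem.ADLP.0} with $|I|=1$; for $k=3$ the value $\delta(3,3)=6$ is already known. With these base values in hand and the trichotomy used correctly, the proof is complete.
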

Investigating the lower bound on $\delta(d,k)$, Deza, Manoussakis, and Onn~\cite{DezaManoussakisOnn2015} introduced the \emph{primitive lattice polytope} $H_1(d,p)$ as the Minkowski sum of the following set of lattice vectors:
$$
\{v\in\ZZ^d\,:\,\|v\|_1\leq p\,,\ \gcd(v)=1\,,\ v\succ 0\}\mbox{,}
$$
where $\gcd(v)$ is the largest integer dividing all the coordinates of $v$, and $v\succ0$ when the first non-zero coordinate of $v$ is positive. They  showed that, for any $k\leq{2d-1}$, there exists  a subset of the generators of  $H_1(d,2)$ whose Minkowski sum is, up to translation, a lattice $(d,k)$-polytope with diameter $\lfloor (k+1)d/2\rfloor$. As a consequence, they obtain the lower bound $\delta(d,k)\geq \lfloor (k+1)d/2\rfloor$  for  $k\leq 2d-1$, and  propose the following conjecture:
\begin{conj}[\cite{DezaManoussakisOnn2015}]\label{CC}
$\delta(d,k)$ is at most $\lfloor(k+1)d/2\rfloor$, and is achieved, up to translation, by a Minkowski sum of lattice vectors.
\end{conj}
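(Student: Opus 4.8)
\section*{Proof proposal}

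The statement combines a lower and an upper bound. The lower bound is already close to settled: Deza, Manoussakis, and Onn realize $\lfloor(k+1)d/2\rfloor$ by a sub-sum of the generators of $H_1(d,2)$ for every $k\leq2d-1$, so for this half the plan is only to extend their zonotopal construction to the range $k\geq2d$ and to confirm that the resulting Minkowski sum is a lattice $(d,k)$-polytope whose diameter equals its number of generators. The substance of the conjecture is therefore the upper bound $\delta(d,k)\leq\lfloor(k+1)d/2\rfloor$, and I would attack it by sharpening the shortest-path accounting used by Kleinschmidt--Onn, by Del~Pia--Michini, and in the present paper.

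Fix a lattice $(d,k)$-polytope $P$ and two vertices $u$, $w$ with $\delta(P)=\delta(d,k)$; translate $u$ to the origin and, where convenient, apply the reflections $x_j\mapsto k-x_j$ so that a geodesic $u=v_0,v_1,\dots,v_\delta=w$ runs through $P$ as monotonically as possible. For each edge, set $s_i=\lvert\{j:(v_i-v_{i-1})_j\neq0\}\rvert$, let $a$ be the number of \emph{axis-parallel} edges (those with $s_i=1$), and for each coordinate $j$ let $m_j$ count the edges active in coordinate $j$, so that $\sum_i s_i=\sum_j m_j$. Because an axis-parallel edge contributes $1$ and every other edge contributes at least $2$ to $\sum_i s_i$, one has the purely combinatorial inequality
\[
\delta\leq\tfrac{1}{2}\Bigl(\sum_i s_i+a\Bigr)=\tfrac{1}{2}\Bigl(\sum_j m_j+a\Bigr).
\]
The entire upper bound would then follow from two independent estimates: $(\mathrm{A})$ an \emph{oscillation} bound $m_j\leq k$ for every $j$, so that $\sum_j m_j\leq kd$; and $(\mathrm{B})$ an \emph{axis} bound $a\leq d$. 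Indeed these give $\delta\leq\tfrac12(kd+d)=\tfrac{(k+1)d}{2}$, and the conjectured value $\lfloor(k+1)d/2\rfloor$ follows since $\delta$ is an integer. The decomposition is tight: the extremal zonotope uses exactly one axis-parallel generator per coordinate and makes each coordinate active exactly $k$ times, so $(\mathrm{A})$ and $(\mathrm{B})$ hold there with equality.

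To prove $(\mathrm{A})$ I would show that a geodesic can be chosen so that each coordinate is weakly monotone along it; then the integer value of coordinate $j$ moves by at most $k$ in total and, since each active edge moves it by at least $1$, it is active at most $k$ times. To prove $(\mathrm{B})$ I would argue that two edges parallel to the same vector $e_j$ cannot both occur on a shortest path: collinear consecutive edges are impossible because their shared endpoint would fail to be a vertex, and the remaining case, namely two $e_j$-edges at different positions in the other coordinates, should be excludable by a rerouting argument that does not lengthen the path. The achievability half of the statement would be finished by a \emph{straightening} step: starting from any diameter-maximizing $P$, replace it by the Minkowski sum of a family of primitive lattice vectors realizing an extremal geodesic, and check that this zonotope is again a lattice $(d,k)$-polytope of at least the same diameter.

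The main obstacle is that $(\mathrm{A})$ and $(\mathrm{B})$ together demand a \emph{multiplicative} saving, whereas every bound obtained so far delivers only an \emph{additive} one. The estimate $\delta(d,k)\leq kd-\lceil2d/3\rceil$ of \thmref{UB+}, like its predecessors, shaves a term of order $d$ off the trivial bound $kd$; the conjecture instead requires the coefficient of $kd$ to fall from $1$ to $\tfrac12$, which can hold only if coordinates are prevented from oscillating and axis-parallel directions from repeating once $k$ grows large. Controlling this behaviour along a genuine geodesic is exactly what current techniques cannot yet do, and it is the reason the statement is still a conjecture, confirmed only for $k\leq2$ and for isolated pairs such as $(d,k)=(4,3)$.
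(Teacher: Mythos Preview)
The statement you are addressing is labelled in the paper as a \emph{conjecture} (Conjecture~\ref{CC}), not a theorem: the paper does not prove it, and indeed stresses that only the cases $k\le 2$, the planar case, and the single value $\delta(4,3)=8$ are settled. There is therefore no proof in the paper against which to compare your proposal; what the paper actually establishes is the weaker bound $\delta(d,k)\le kd-\lceil 2d/3\rceil$ of Theorem~\ref{UB+}, which, as you correctly observe in your final paragraph, is an additive rather than multiplicative improvement over $kd$.

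Your outline has a concrete gap at step~$(\mathrm{A})$: it is \emph{false} that between any two vertices of a lattice $(d,k)$-polytope one can always choose a geodesic that is weakly monotone in every coordinate. Already for $d=2$ this fails: in the hexagon with vertex set $\{(0,2),(1,0),(3,0),(4,2),(3,4),(1,4)\}$ both geodesics from $(0,2)$ to $(4,2)$ have second coordinate $2,0,0,2$ or $2,4,4,2$. Without monotonicity there is no reason for $m_j\le k$, since an oscillating coordinate can be active arbitrarily many times while staying inside $[0,k]$. Step~$(\mathrm{B})$ is in similar shape: the ``rerouting argument'' meant to forbid two $e_j$-parallel edges on a single geodesic is not supplied, and there is no general shortcutting mechanism in polytope graphs that swaps parallel edges without lengthening the path. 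These two estimates are exactly the multiplicative saving the conjecture demands and no known technique delivers them; you have identified the right target but not closed the distance, which is consistent with the statement's status as an open problem.
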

The $2$-dimensional case had been previously studied in the early 1990's independently by Thiele~\cite{Thiele1991}, Balog and B\'ar\'any~\cite{BalogBarany1991}, and Acketa and \v{Z}uni\'{c}~\cite{AcketaZunic1995}. It can also be found in Ziegler's book~\cite{Ziegler1995} as Exercise 4.15. These results on $\delta(2,k)$ can be summarized as follows:
\begin{thm}[\cite{AcketaZunic1995,BalogBarany1991,DezaManoussakisOnn2015,Thiele1991}]\label{2D}
For any $k$, there exists a value of $p$ so that $\delta(2,k)$ is achieved, up to translation, by the Minkowski sum of a subset of the generators of $H_1(2,p)$. Moreover, for any $p$, and for $k=\sum_{i=1}^pi\phi(i)$, $\delta(2,k)$ is uniquely achieved, up to translation, by  $H_1(2,p)$, where $\phi$ denotes Euler's totient function. Thus, $\delta(2,k)=6(\frac{k}{2\pi})^{2/3}+O(k^{1/3}\log k)$.
\end{thm}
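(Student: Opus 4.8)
The plan is to convert the question into a combinatorial optimization over systems of primitive lattice vectors, solve it exactly when $k$ has the special form $\sum_{i=1}^{p}i\phi(i)$, and read off the general and asymptotic statements from there.

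\emph{Step 1 (reduction to edge vectors).} The graph of a planar polytope with $n$ vertices is an $n$-cycle, so its diameter is $\lfloor n/2\rfloor$; thus $\delta(2,k)$ is essentially half the maximum number of vertices of a convex lattice polygon inside $\{0,\dots,k\}^2$, an immaterial floor since the extremal polygons turn out to be centrally symmetric. Going counterclockwise around the boundary of such a polygon $P$ yields its edge vectors $e_1,\dots,e_n\in\ZZ^2$, with $\sum_ie_i=0$. Using a primitive direction with multiplicity only wastes room, so for the extremal $P$ we may assume each $e_i$ is primitive with pairwise distinct directions. Convexity forces the $x$-coordinate to be unimodal along $\partial P$ (increasing from the leftmost to the rightmost vertex along one arc, decreasing along the other), so the $x$-width of $P$ equals $\sum_{x(e_i)>0}x(e_i)=\tfrac12\sum_i\abs{x(e_i)}$, and similarly for $y$. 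Hence, up to translation, $P\subseteq\{0,\dots,k\}^2$ if and only if $\sum_i\abs{x(e_i)}\le 2k$ and $\sum_i\abs{y(e_i)}\le 2k$.

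\emph{Step 2 (the relaxed bound).} Dropping $\sum_ie_i=0$ and merging the two width constraints into $\sum_i\|e_i\|_1\le 4k$ gives $\delta(2,k)\le\tfrac12N(k)$, where $N(k)$ is the largest cardinality of a set of primitive vectors of total $\ell_1$-norm at most $4k$. There are exactly $4\phi(i)$ primitive vectors of $\ell_1$-norm $i$ (for $i\ge 2$, choose $\abs{a}$ among the $\phi(i)$ residues in $\{1,\dots,i-1\}$ coprime to $i$, then four signs; four vectors for $i=1$), each contributing $i$ to the norm, so $N(k)$ is computed by greedily filling the layers $i=1,2,\dots$ in increasing order.

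\emph{Step 3 (the exact case and uniqueness).} When $k=\sum_{i=1}^{p}i\phi(i)$ the budget $4k=\sum_{i=1}^{p}4i\phi(i)$ is exactly the cost of the set $E_p$ of \emph{all} primitive vectors of $\ell_1$-norm at most $p$. Since the $4\sum_{i=1}^{p}\phi(i)$ shortest primitive vectors are precisely those of norm $\le p$, the set $E_p$ is the \emph{unique} set of primitive vectors of that cardinality with total norm $\le 4k$; any larger set overshoots. As $E_p=-E_p$, it sums to zero and satisfies $\sum_{e\in E_p}\abs{x(e)}=\sum_{e\in E_p}\abs{y(e)}=2k$ by the $x\leftrightarrow y$ symmetry, so it is the edge-vector set of an honest lattice polygon in $\{0,\dots,k\}^2$ — namely $H_1(2,p)$, whose generators are one vector from each pair $\{e,-e\}\subseteq E_p$. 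Since the edge-vector set determines a polygon up to translation, this gives $\delta(2,k)=2\sum_{i=1}^{p}\phi(i)$ and that it is attained, uniquely up to translation, by $H_1(2,p)$.

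\emph{Step 4 (general $k$, realization, asymptotics; main obstacle).} For arbitrary $k$, take the largest $p$ with $\sum_{i=1}^{p}i\phi(i)\le k$; monotonicity of $\delta(2,\cdot)$ and the greedy evaluation of $N(k)$ then sandwich $\delta(2,k)$ between $2\sum_{i=1}^{p}\phi(i)$ and $2\sum_{i=1}^{p+1}\phi(i)$, and one realizes the optimum by the Minkowski sum of all generators of $H_1(2,p)$ together with a balanced selection of norm-$(p{+}1)$ generators, i.e.\ a subset of the generators of $H_1(2,p+1)$. The delicate point — and the technical heart of the statement — is this realization step: because the $x$- and $y$-budgets are coupled, the relaxed bound $\tfrac12N(k)$ need not be attainable, so one must rule out directly any ``unbalanced'' competitor using a long edge vector, via an exchange argument that trades one long primitive edge for shorter unused ones while respecting both width constraints. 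Finally, inserting the classical estimates $\sum_{i\le p}\phi(i)=\tfrac{3}{\pi^2}p^2+O(p\log p)$ and $\sum_{i\le p}i\phi(i)=\tfrac{2}{\pi^2}p^3+O(p^2\log p)$, and solving $k=\tfrac{2}{\pi^2}p^3+O(p^2\log p)$ for $p=(\pi^2k/2)^{1/3}+O(\log k)$, yields $\delta(2,k)=\tfrac{6}{\pi^2}p^2+O(p\log p)=6\bigl(\tfrac{k}{2\pi}\bigr)^{2/3}+O(k^{1/3}\log k)$.
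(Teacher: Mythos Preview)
The paper does not prove this theorem: it is quoted from the literature (Acketa--\v{Z}uni\'{c}, Balog--B\'ar\'any, Thiele, and Deza--Manoussakis--Onn) and used only as input, in particular to get $\delta(2,k)\le k$ for $k\ge4$ in the proof of Theorem~\ref{UB++}. So there is no ``paper's own proof'' to compare against.

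Your sketch is a faithful reconstruction of the classical argument behind those references: reduce to maximizing the number of edges of a lattice polygon in $[0,k]^2$, encode edges by primitive vectors, bound their total $\ell_1$-norm by $4k$, and fill greedily by $\ell_1$-shells. The counting $4\phi(i)$ per shell, the identification of $H_1(2,p)$ at the special values $k=\sum_{i\le p}i\phi(i)$, and the asymptotic via $\sum_{i\le p}\phi(i)\sim\tfrac{3}{\pi^2}p^2$ and $\sum_{i\le p}i\phi(i)\sim\tfrac{2}{\pi^2}p^3$ are all correct. You also correctly flag the one genuine subtlety: for intermediate $k$ the relaxed bound $\tfrac12 N(k)$ need not be tight, because the two width constraints (in $x$ and in $y$) must be met \emph{separately}, not just in sum; one needs an exchange argument showing that an optimal edge set can always be balanced into a subset of the generators of $H_1(2,p{+}1)$. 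A complete proof would carry out that exchange explicitly (and also justify, rather than assert, that extremal polygons may be taken centrally symmetric so that $\delta(P)=n/2$ without a floor); as a sketch identifying the structure and the obstacle, your write-up is accurate.
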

We obtain a previously unknown value of $\delta(d,k)$ as a consequence of Theorem~\ref{UB++} and of the lower bound on $\delta(d,k)$ provided in~\cite{DezaManoussakisOnn2015}:
\begin{cor}\label{coro}
$\delta(4,3)=8$. 
\end{cor}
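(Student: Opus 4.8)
The plan is to combine the new upper bound from Theorem~\ref{UB++} with the known lower bound of Deza, Manoussakis, and Onn. For the lower bound, I would invoke the result quoted in the introduction: since $3\leq 2\cdot4-1=7$, there is a Minkowski sum of generators of $H_1(4,2)$ that is, up to translation, a lattice $(4,3)$-polytope of diameter $\lfloor(k+1)d/2\rfloor = \lfloor 4\cdot4/2\rfloor = 8$. Hence $\delta(4,3)\geq 8$.

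For the matching upper bound, I would apply Theorem~\ref{UB++} with $d=4$ and $k=3$. Here $d=4\equiv 1\pmod 3$, so we are in case $(ii)$, which gives $\delta(4,3)\leq\lfloor 7\cdot 4/3\rfloor - 1 = \lfloor 28/3\rfloor - 1 = 9 - 1 = 8$. (As a sanity check, part $(i)$ does not apply since $k=3<4$, and the bare Theorem~\ref{UB+} would only give $kd-\lceil 2d/3\rceil = 12 - 3 = 9$, which is why the refinement in Theorem~\ref{UB++} is needed.) Combining the two inequalities yields $\delta(4,3)=8$.

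Since both ingredients are already established — the lower bound is cited from~\cite{DezaManoussakisOnn2015} and the upper bound is Theorem~\ref{UB++}$(ii)$ — the proof of the corollary is essentially a one-line arithmetic verification, so there is no real obstacle here. The genuine work lies entirely in proving Theorem~\ref{UB++}, and in particular in sharpening the generic bound $kd-\lceil 2d/3\rceil$ by the additional $-1$ in the case $d\not\equiv 2\pmod 3$; that refinement is precisely what closes the gap between $9$ and $8$ in dimension four. I would simply write out the chain $8\leq\delta(4,3)\leq 8$ and conclude.
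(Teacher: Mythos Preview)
Your proposal is correct and matches the paper's own treatment: the corollary is stated precisely as a consequence of Theorem~\ref{UB++} together with the lower bound $\delta(d,k)\geq\lfloor(k+1)d/2\rfloor$ for $k\leq 2d-1$ from~\cite{DezaManoussakisOnn2015}, and your arithmetic verification of both sides is exactly what is needed. The only cosmetic difference is that, inside the proof of Theorem~\ref{UB++}, the paper actually derives $\delta(4,3)\leq 8$ directly from Theorem~\ref{Theo} and the known values $\delta(1,3)=1$, $\delta(2,3)=4$, $\delta(3,3)=6$ (this serves as the base case for the induction establishing~$(ii)$ and~$(iii)$), so citing Theorem~\ref{UB++}$(ii)$ is in effect citing that computation.
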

All the values of $\delta(d,k)$ known so far are reported in Table~\ref{delta(d.k)}.

This paper is organized as follows. 
\begin{table}[b]
\begin{center}
\setlength{\tabcolsep}{0pt}
\begin{tabular}{c>{\centering\arraybackslash} p{3pt} <{}>{\centering\arraybackslash} p{7pt} <{}c>{\centering\arraybackslash} p{2pt} <{}>{\centering\arraybackslash} p{6pt} <{}>{\centering\arraybackslash} p{2pt} <{}>{\centering\arraybackslash} p{0.7cm} <{}>{\centering\arraybackslash} p{0.7cm} <{}>{\centering\arraybackslash} p{0.7cm} <{}>{\centering\arraybackslash} p{0.7cm} <{}>{\centering\arraybackslash} p{0.7cm} <{}>{\centering\arraybackslash} p{0.7cm} <{}>{\centering\arraybackslash} p{0.7cm} <{}>{\centering\arraybackslash} p{0.7cm} <{}>{\centering\arraybackslash} p{0.7cm} <{}>{\centering\arraybackslash} p{0.7cm} <{}}
& & & & & & \multicolumn{11}{c}{$k$}\\
\multicolumn{6}{c|}{} & & $1$ & $2$ & $3$ & $4$ & $5$ & $6$ & $7$ & $8$ & $9$ & $10$\\
\cline{3-17}
\multirow{6}{*}{\begin{sideways}{$d$}\end{sideways}}
& & & 1 & & \multicolumn{1}{c|}{} & &  1 & 1 & 1 & 1 & 1 & 1 & 1 & 1 & 1 & $\dots$ \\
& & & 2 & & \multicolumn{1}{c|}{} & & 2 & 3 & 4 & 4 & 5 & 6 & 6 & 7 & 8 & $\dots$ \\
& & & 3 & & \multicolumn{1}{c|}{} & & 3 & 4 & 6 & \ &  & & & & & \\
& & & 4 & & \multicolumn{1}{c|}{} & & 4 & 6 & 8 & & & & & & &\\
& & & \vdots & & \multicolumn{1}{c|}{} & & $\vdots$ & $\vdots$ & & & & & & & &\\
& & & $d$ & & \multicolumn{1}{c|}{} & & $d$ & $\left\lfloor\frac{3}{2}d\right\rfloor$ & & & & & & & &\\\\
\end{tabular}\caption{The largest possible diameter $\delta(d,k)$ of a lattice $(d,k)$-polytope.}\label{delta(d.k)}
\end{center}
\end{table}
In Section~\ref{Sec.ADLP.Prelim}, we prove slightly more general versions of two lemmas from~\cite{DelPiaMichini2016}. Theorems~\ref{UB+} and \ref{UB++} are proven in Section~\ref{Sec.ADLP.Main}. Their proof is done by induction on the dimension. Two lemmas that allow to proceed with the inductive step in these proofs are given in Section~\ref{Sec.ADLP.Theo}. We discuss the limitations of the approach in Section~\ref{Sec.ADLP.Conc}, and provide some perspectives for possible extensions of our results.
%
\section{Preliminary lemmas}\label{Sec.ADLP.Prelim}
Given two vertices $u$ and $v$ of a polytope $P$, we call $d(u,v)$ their distance in the graph of $P$. If $F$ is a face of $P$, we further call
$$
d(u,F)=\min\{d(u,v):v\in{F}\}\mbox{.}
$$

The coordinates of a vector $x\in\mathbb{R}^d$ will be denoted by $x_1$ to $x_d$, and its scalar product with a vector $y\in\mathbb{R}^d$ by $x\mathord{\cdot}y$. We first recall a lemma introduced by Del Pia and Michini, see Lemma 2 in~\cite{DelPiaMichini2016}:
%
\begin{lem}[\cite{DelPiaMichini2016}]\label{facet}
Consider a lattice $(d,k)$-polytope $P$. If $u$ is a vertex of $P$ and $c\in\mathbb{R}^d$ a vector with integer coordinates, then $d(u,F)\leq c\mathord{\cdot}u-\gamma$ where $\gamma=\min\{c\mathord{\cdot}x : x\in{P}\}$ and $F=\{x\in{P}: c\mathord{\cdot}x=\gamma\}$.
\end{lem}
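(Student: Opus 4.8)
The plan is to induct on the graph distance $d(u,F)$, or equivalently to walk from $u$ toward $F$ along edges of $P$ while keeping track of how much the linear functional $c\mathord{\cdot}x$ decreases at each step. Set $\ell=d(u,F)$ and let $u=u_0,u_1,\dots,u_\ell$ be a shortest path in the graph of $P$ from $u$ to a vertex $u_\ell$ lying on $F$ (such a path exists and a closest vertex of $F$ to $u$ is attained since $P$ has finitely many vertices). The key point is that along a \emph{shortest} path to $F$ the value of $c\mathord{\cdot}x$ must strictly decrease at every step: if some step had $c\mathord{\cdot}u_{i+1}\geq c\mathord{\cdot}u_i$, one could argue that there is an edge out of $u_i$ on which $c\mathord{\cdot}x$ strictly decreases and which does not increase the distance to $F$, contradicting minimality — more carefully, one uses that $F$ is the face of $P$ minimizing $c\mathord{\cdot}x$, so from any vertex not on $F$ there is an edge strictly decreasing $c\mathord{\cdot}x$, and by a standard exchange/simplex-type argument one can choose the shortest path so that $c\mathord{\cdot}u_i$ is strictly decreasing in $i$.

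Granting that, the decisive observation is that $c$ has integer coordinates and every $u_i$ has integer coordinates (being a vertex of a lattice polytope), so each $c\mathord{\cdot}u_i$ is an integer; a strict decrease of an integer sequence means $c\mathord{\cdot}u_{i+1}\leq c\mathord{\cdot}u_i-1$ for each $i$. Summing over the $\ell$ steps of the path gives
$$
c\mathord{\cdot}u_\ell\leq c\mathord{\cdot}u-\ell=c\mathord{\cdot}u-d(u,F)\mbox{.}
$$
Since $u_\ell\in F$ we have $c\mathord{\cdot}u_\ell=\gamma$, and rearranging yields $d(u,F)\leq c\mathord{\cdot}u-\gamma$, as desired.

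The main obstacle is making rigorous the claim that one may select a shortest $u$–$F$ path along which $c\mathord{\cdot}x$ strictly decreases at every step. The cleanest route is: among all shortest paths from $u$ to $F$, pick one; if at some vertex $u_i$ (necessarily not on $F$, since $i<\ell$) the next step does not strictly decrease $c\mathord{\cdot}x$, then because $F$ is the minimizer face of the linear functional $c\mathord{\cdot}x$ on $P$, there is an edge of $P$ at $u_i$ along which $c\mathord{\cdot}x$ strictly decreases, leading to a vertex $w$ with $d(w,F)\le d(u_i,F)-1$ (one can see this by noting that following steepest-descent edges for $c$ reaches $F$, and such a descent from $u_i$ has length at most $d(u_i,F)$, but in fact one only needs that some descending edge does not increase the distance to $F$); replacing the tail of the path from $u_i$ onward by this descending edge followed by a shortest path to $F$ does not lengthen the path and reduces the number of non-decreasing steps, so by induction on that count we obtain a shortest path that is strictly $c$-decreasing throughout. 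Once this structural fact is in hand, the integrality argument above finishes the proof immediately.
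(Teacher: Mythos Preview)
The paper does not prove this lemma; it is quoted from Del Pia and Michini. The standard argument, however, is much shorter than what you attempt, and your version contains a genuine gap.

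You have all the right ingredients: at any vertex not on $F$ there is an adjacent vertex with strictly smaller $c$-value, and integrality forces each strict decrease to be by at least $1$. The problem is the direction in which you run the argument. You start from a \emph{shortest} path $u=u_0,\dots,u_\ell$ to $F$ and try to massage it into a $c$-decreasing one via an exchange argument. The exchange step asserts that the $c$-decreasing neighbour $w$ of $u_i$ satisfies $d(w,F)\le d(u_i,F)-1$. You justify this by saying that ``following steepest-descent edges for $c$ reaches $F$, and such a descent from $u_i$ has length at most $d(u_i,F)$''. But that last clause is exactly the statement of the lemma applied at $u_i$ (indeed it is the reverse inequality: the descent has length at most $c\mathord{\cdot}u_i-\gamma$, and we want to compare this to $d(u_i,F)$). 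So the exchange argument is circular, and without it there is no reason the $c$-decreasing edge should lead to a vertex closer to $F$ in graph distance.

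The clean fix is to abandon the shortest path entirely. From $u$, repeatedly move to an adjacent vertex with strictly smaller $c$-value; this is possible as long as the current vertex is not in $F$. By integrality each step drops $c\mathord{\cdot}x$ by at least $1$, so after at most $c\mathord{\cdot}u-\gamma$ steps the walk terminates at a vertex of $F$. This walk is itself a path in the graph of $P$ from $u$ to $F$, hence $d(u,F)$ is at most its length, which is at most $c\mathord{\cdot}u-\gamma$. That is the whole proof; no exchange or minimality argument is needed.
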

Lemma~\ref{Lem.ADLP.0} is a generalization of Lemma 4 from~\cite{DelPiaMichini2016}:
\begin{lem}\label{Lem.ADLP.0}
Consider a lattice $(d,k)$-polytope $P$. If $I$ is a subset of $\{1,\dots d\}$ such that $l_i\leq{x_i}\leq{h_i}$ for all $x\in{P}$ and all $i\in{I}$, then
$$
\delta(P)\leq{\delta(d-|I|,k)+\sum\limits_{i\in I}(h_i-l_i)}\mbox{.}
$$
\begin{proof}
We use an induction on $|I|$. The statement is obviously true when $I$ is empty, and simplifies to that of Lemma 4 from~\cite{DelPiaMichini2016} when $|I|=1$.

Assume that, for some integer $n\geq1$, the statement holds when $|I|={n}$. Further assume that $|I|=n+1$. Consider an index $j\in{I}$ and respectively denote by $L_j$
 and by $H_j$ the intersections of $P$ with $\{x\in\RR^d:x_j=l_j\}$ and with $\{x\in\RR^d:x_j=h_j\}$. We can assume without loss of generality that $L_j$ and $H_j$ are both non-empty. Note that $L_j$ and $H_j$ are faces of $P$ and, possibly up to an affine transformation, lattice $(d-1,k)$-polytopes. By assumption, if $x$ belongs to either $L_j$ or $H_j$, then $l_i\leq{x_i}\leq{h_i}$ for all $i\in{I\setminus\{j\}}$. Therefore, by induction, the following inequality holds:
\begin{equation}\label{Lem.ADLP.0.eq.0}
\max\{\delta(L_j),\delta(H_j)\}\leq\delta(d-|I|,k)+\sum_{i\in{I\setminus\{j\}}}(h_i-l_i)\mbox{.}
\end{equation}

Since $P$ is a lattice polytope, $d(x,L_j)\leq{x_j-l_j}$ and $d(x,H_j)\leq{h_j-x_j}$ for any vertex $x$ of $P$. Thus, for any two vertices $u$ and $v$ of $P$, we either have the inequality $d(u,L_j)+d(v,L_j)\leq{h_j-l_j}$ (when $u_j+v_j\leq{h_j+l_j}$) or the inequality $d(u,H_j)+d(v,H_j)\leq{h_j-l_j}$ (when $u_j+v_j>h_j+l_j$). As a consequence,
\begin{equation}\label{Lem.ADLP.0.eq.1}
\delta(P)\leq\max\{\delta(L_j),\delta(H_j)\}+h_j-l_j\mbox{.}
\end{equation}

Combining inequalities (\ref{Lem.ADLP.0.eq.0}) and (\ref{Lem.ADLP.0.eq.1}) completes the proof. 
\end{proof}
\end{lem}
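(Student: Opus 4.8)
The plan is to argue by induction on the size of $I$. The base case $|I|=0$ is immediate, since then the asserted bound reads $\delta(P)\le\delta(d,k)$, which holds by the very definition of $\delta(d,k)$; and the case $|I|=1$ is exactly Lemma~4 of~\cite{DelPiaMichini2016}, so the induction can equally well be started there. For the inductive step I would assume the bound for all index sets of some size $n\geq1$ and then treat an index set $I$ with $|I|=n+1$.

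First I would fix an index $j\in I$ and slice $P$ with the two supporting hyperplanes $x_j=l_j$ and $x_j=h_j$, obtaining faces $L_j$ and $H_j$ of $P$. After deleting the now-constant $j$-th coordinate, each of $L_j$ and $H_j$ is, up to an affine transformation preserving the integer lattice, a lattice $(d-1,k)$-polytope, and each still satisfies the box constraints $l_i\le x_i\le h_i$ for every $i\in I\setminus\{j\}$. Applying the induction hypothesis to $L_j$ and to $H_j$ with the index set $I\setminus\{j\}$, which has size $n$, then gives
$$
\max\{\delta(L_j),\delta(H_j)\}\le\delta(d-|I|,k)+\sum_{i\in I\setminus\{j\}}(h_i-l_i)\mbox{,}
$$
using that $d-1-|I\setminus\{j\}|=d-|I|$.

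It remains to compare $\delta(P)$ with $\delta(L_j)$ and $\delta(H_j)$. Taking $c=e_j$ and then $c=-e_j$ in Lemma~\ref{facet} yields $d(x,L_j)\le x_j-l_j$ and $d(x,H_j)\le h_j-x_j$ for every vertex $x$ of $P$ (after checking that the corresponding minimising faces are precisely $L_j$ and $H_j$, which one may assume nonempty). Hence for any two vertices $u,v$ of $P$ one of the two sums $d(u,L_j)+d(v,L_j)$ and $d(u,H_j)+d(v,H_j)$ is at most $h_j-l_j$ — the first when $u_j+v_j\le h_j+l_j$, the second otherwise — so concatenating a shortest path from $u$ to the relevant face, a path within that face, and a shortest path from the face to $v$ shows $d(u,v)\le\max\{\delta(L_j),\delta(H_j)\}+(h_j-l_j)$. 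Combining this with the displayed inequality closes the induction.

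The one point that needs genuine care is the reduction of $L_j$ and $H_j$ to lattice $(d-1,k)$-polytopes: one must verify that projecting out coordinate $j$ keeps the remaining vertex coordinates integral and within $\{0,\dots,k\}$, and one must dispose of the degenerate case in which $L_j$ or $H_j$ is empty — there the corresponding bound $l_j$ or $h_j$ can be tightened without changing $P$, so we may assume both faces are nonempty. Apart from that, the argument is a short sequence of graph-distance estimates built on Lemma~\ref{facet}.
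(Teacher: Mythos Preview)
Your proof is correct and follows essentially the same route as the paper's: induction on $|I|$, slicing at $x_j=l_j$ and $x_j=h_j$, applying the induction hypothesis to the resulting faces with index set $I\setminus\{j\}$, and bounding $\delta(P)$ by $\max\{\delta(L_j),\delta(H_j)\}+(h_j-l_j)$ via the distance estimates $d(x,L_j)\le x_j-l_j$ and $d(x,H_j)\le h_j-x_j$. The only cosmetic differences are that you invoke Lemma~\ref{facet} explicitly for these distance estimates (the paper simply appeals to $P$ being a lattice polytope) and that you spell out the nonemptiness and projection details a bit more carefully.
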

The following result is obtained by invoking Lemma~\ref{facet} for two vertices $u$ and $v$ of a lattice $(d,k)$-polytope $P$, with the same, well-chosen vector $c$.
\begin{lem}\label{Lem.ADLP.1}
Consider two vertices $u$ and $v$ of a lattice $(d,k)$-polytope $P$. If $I$ is a subset of $\{1,...,d\}$ with cardinality at most $3$ such that $u_i+v_i\leq{k}$ when $i\in{I}$,
then the following inequality holds:
$$
d(u,v)\leq\delta(d-|I|,k)+\sum\limits_{i\in I}(u_i+v_i)\mbox{.}
$$
\end{lem}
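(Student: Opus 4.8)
The plan is to apply Lemma~\ref{facet} with a single vector $c$ chosen so that it simultaneously controls the distances from $u$ and from $v$ to the face $F$ it cuts out, and so that the part of the polytope ``orthogonal'' to $c$ is again a lattice polytope of the right dimension. Concretely, I would take $c$ to be the indicator vector of $I$, i.e.\ $c_i=1$ for $i\in I$ and $c_i=0$ otherwise. Then $c$ has integer coordinates, $\gamma=\min\{c\mathord{\cdot}x:x\in P\}\geq 0$ since all coordinates of points of $P$ are nonnegative, and $F=\{x\in P:c\mathord{\cdot}x=\gamma\}$ is a face of $P$. Lemma~\ref{facet} then gives $d(u,F)\leq c\mathord{\cdot}u-\gamma$ and $d(v,F)\leq c\mathord{\cdot}v-\gamma$, hence by the triangle inequality
$$
d(u,v)\leq d(u,F)+\delta(F)+d(v,F)\leq \delta(F)+\sum_{i\in I}(u_i+v_i)-2\gamma\leq \delta(F)+\sum_{i\in I}(u_i+v_i)\mbox{.}
$$

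The remaining point is to bound $\delta(F)$ by $\delta(d-|I|,k)$. The face $F$ lies in the affine subspace $\{x:\sum_{i\in I}x_i=\gamma\}$, which has dimension $d-1$ only, not $d-|I|$; so a little more care is needed, and this is where I expect the main (though still modest) obstacle to lie. One clean way around it: instead of a single hyperplane, observe that on $F$ each coordinate $x_i$ with $i\in I$ is an integer lying between $0$ and $k$, so $F$ is a lattice polytope that, after deleting the $|I|$ coordinates indexed by $I$ (a coordinate projection that is injective on $F$ once we restrict to the affine hull, or alternatively an affine transformation as in the proof of Lemma~\ref{Lem.ADLP.0}), becomes a lattice $(d-|I|,k)$-polytope; hence $\delta(F)\leq\delta(d-|I|,k)$. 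Actually the slickest route avoids $F$ altogether and invokes the already-proved Lemma~\ref{Lem.ADLP.0} machinery in spirit: apply Lemma~\ref{facet} to get $d(u,F)$ and $d(v,F)$, then note that $F$ has all coordinates in $I$ fixed to values summing to $\gamma$, so iterating the dimension reduction (one coordinate at a time, exactly as in Lemma~\ref{Lem.ADLP.0}) collapses $F$ to a lattice $(d-|I|,k)$-polytope. The hypothesis $|I|\leq 3$ is not actually used in this argument for the inequality itself — it is presumably retained because $\delta(d-|I|,k)$ is only a useful bound (or only known) for the small values of $|I|$ that occur in the applications in Section~\ref{Sec.ADLP.Main}.

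So the key steps, in order, are: (1) set $c=\sum_{i\in I}e_i$ and record that $\gamma\geq 0$; (2) invoke Lemma~\ref{facet} twice to bound $d(u,F)$ and $d(v,F)$ by $\sum_{i\in I}u_i-\gamma$ and $\sum_{i\in I}v_i-\gamma$ respectively; (3) identify $F$, via successive single-coordinate slicings (each producing a lattice polytope of one lower dimension, up to affine transformation, just as in Lemma~\ref{Lem.ADLP.0}), with a lattice $(d-|I|,k)$-polytope, giving $\delta(F)\leq\delta(d-|I|,k)$; (4) combine with the triangle inequality $d(u,v)\leq d(u,F)+\delta(F)+d(v,F)$ and drop the $-2\gamma\leq 0$ term. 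The condition $u_i+v_i\leq k$ for $i\in I$ is exactly what is needed — together with $u_i,v_i\geq 0$ — for the right-hand side to stay within the regime where the bound is informative, but the inequality as stated holds regardless; the genuinely substantive input is Lemma~\ref{facet}, and the only thing to be careful about is the dimension bookkeeping when passing from $P$ to $F$.
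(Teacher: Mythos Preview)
Your steps (1), (2), and (4) match the paper, but step (3) has a real gap: the inequality $\delta(F)\leq\delta(d-|I|,k)$ is false in general. On $F$ the individual coordinates $x_i$ for $i\in I$ are \emph{not} fixed---only their sum $\sum_{i\in I}x_i=\gamma$ is---so the coordinate projection dropping those $|I|$ coordinates can have $(|I|-1)$-dimensional fibres on the affine hull of $F$ and need not be injective, nor can you iterate single-coordinate slicings as in Lemma~\ref{Lem.ADLP.0} to shed $|I|$ dimensions for free. Concretely, take $d=3$, $k=3$, $I=\{1,2\}$, and let $P$ be the triangular prism with vertices $(1,0,0)$, $(0,1,0)$, $(3,3,0)$, $(1,0,3)$, $(0,1,3)$, $(3,3,3)$. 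Then $\gamma=1$ and $F$ is the rectangular facet on the four vertices $(1,0,0)$, $(0,1,0)$, $(1,0,3)$, $(0,1,3)$, whose graph diameter is $2$, whereas $\delta(d-|I|,k)=\delta(1,3)=1$.

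The paper's fix is to accept an additive loss when bounding $\delta(F)$. Since every $x\in F$ satisfies $0\leq x_i\leq\gamma$ for $i\in I$, one first projects $F$ (via Theorem~3.3 of~\cite{NaddefPulleyblank1984}) to a lattice $(d-1,k)$-polytope $\bar F$ with $\delta(\bar F)=\delta(F)$, and then applies Lemma~\ref{Lem.ADLP.0} to $\bar F$ with index set $I\setminus\{j\}$ and ranges $h_i-l_i=\gamma$, obtaining
\[
\delta(F)\leq\delta(d-|I|,k)+(|I|-1)\gamma.
\]
Combined with your inequality $d(u,v)\leq\delta(F)+\sum_{i\in I}(u_i+v_i)-2\gamma$, this gives
\[
d(u,v)\leq\delta(d-|I|,k)+\sum_{i\in I}(u_i+v_i)+(|I|-3)\gamma,
\]
and it is exactly here, to make $(|I|-3)\gamma\leq0$, that the hypothesis $|I|\leq3$ is used. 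So contrary to your remark, the restriction on $|I|$ is not a cosmetic convenience for later sections but the crux of the argument; this is precisely the limitation flagged in Section~\ref{Sec.ADLP.Conc}, and it is also what makes Remark~\ref{Rem.ADLP.1} (strict inequality when $|I|\leq2$ and $\gamma>0$) possible.
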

\begin{proof}
The statement is obviously true when $I$ is empty. Therefore, we assume that $1\leq|I|\leq3$ in the remainder of the proof.

Consider the vector $c$ of $\mathbb{R}^d$ such that $c_i$ is equal to $1$ if $i\in{I}$ and to $0$ otherwise. By Lemma~\ref{facet}, any vertex $x$ of $P$ satisfies
$$
d(x,F)\leq{c\mathord{\cdot}x-\gamma}\mbox{,}
$$
where $\gamma=\min\{c\mathord{\cdot}x:x\in{P}\}$ and $F=\{x\in{P}:c\mathord{\cdot}x=\gamma\}$.

Hence, if $u$ and $v$ are two vertices of $P$, then
\begin{equation}\label{Lem.ADLP.1.eq.0}
d(u,v)\leq\delta(F)+c\mathord{\cdot}(u+v)-2\gamma\mbox{.}
\end{equation}

Observe that, for any $x\in{F}$ and any $i\in{I}$, the following double inequality holds since the coordinates of $x$ are non-negative and since $c\mathord{\cdot}x=\gamma$:
\begin{equation}\label{Lem.ADLP.1.eq.1}
0\leq{x_i}\leq\gamma\mbox{.}
\end{equation}

According to Theorem 3.3 from~\cite{NaddefPulleyblank1984}, there exists an index $j\in\{1, ..., d\}$ such that the orthogonal projection $\bar{F}$ of $F$ on the hyperplane $\{x\in\RR^d:x_j=0\}$ satisfies $\delta(\bar{F})=\delta(F)$. Note that $\bar{F}$ is a lattice $(d-1,k)$-polytope and that (\ref{Lem.ADLP.1.eq.1}) still holds for any $x\in\bar{F}$ and any $i\in{I}$. Hence, applying Lemma~\ref{Lem.ADLP.0} to $\bar{F}$ and to the set of indices $I\setminus\{j\}$ results in the following upper bound:
$$
\delta(\bar{F})\leq\delta(d-1-|I\setminus\{j\}|,k)+(|I|-1)\gamma\mbox{.}
$$

Observe that $|I\setminus\{j\}|$ is either $|I|-1$ (if $j\in{I}$), or $|I|$ (if $j\not\in{I}$). In both cases, $\delta(d-1-|I\setminus\{j\}|,k)\leq\delta(d-|I|,k)$. As in addition, $F$ and $\bar{F}$ have the same diameter, the above upper bound on $\delta(\bar{F})$ yields
$$
\delta(F)\leq\delta(d-|I|,k)+(|I|-1)\gamma\mbox{,}
$$
which, combined with (\ref{Lem.ADLP.1.eq.0}), results in the following inequality:
\begin{equation}\label{Lem.ADLP.1.eq.2}
d(u,v)\leq\delta(d-|I|,k)+\sum\limits_{i\in I}(u_i+v_i)+(|I|-3)\gamma\mbox{.}
\end{equation}

As $\gamma\geq0$ and $|I|\leq3$, this completes the proof.
\end{proof}
A key ingredient for the inductive step of our main proof is the following.
\begin{rem}\label{Rem.ADLP.1}
Note that the term $(|I|-3)\gamma$ in the right-hand side of (\ref{Lem.ADLP.1.eq.2}) is negative if both $1\leq|I|\leq2$ and  the sum $\sum_{i\in{I}}x_i$ is non-zero for all $x\in{P}$. As a consequence, the inequality provided by Lemma~\ref{Lem.ADLP.1} is strict in this case.
\end{rem}

We now state a technical lemma that will be invoked twice in Section~\ref{Sec.ADLP.Theo}.

\begin{lem}\label{Lem.ADLP.1.5}
Let $u^0$, ..., $u^p$ be the vertices of a lattice $(2,k)$-polytope, labeled clockwise or counter-clockwise. If $u^p=(0,0)$ and $u^0-u^1$ is either $(1,0)$, $(0,1)$, or $(1,1)$, then $u^j_1+u^j_2+2\leq{u^{j-1}_1+u^{j-1}_2}$ whenever $2\leq{j}<p$.
\end{lem}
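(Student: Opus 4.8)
The plan is to exploit the convexity of the lattice polygon together with the clockwise (or counter-clockwise) labeling of its vertices. Set $s_j = u^j_1 + u^j_2$ for $0 \le j \le p$; the claim is that $s_j + 2 \le s_{j-1}$ for $2 \le j < p$. Since $u^p = (0,0)$ we have $s_p = 0$, and from the hypothesis on $u^0 - u^1 \in \{(1,0),(0,1),(1,1)\}$ we get $s_0 - s_1 \in \{1,2\}$, so in particular $s_1 \ge s_0 - 2$. First I would observe that walking along the boundary of the polygon from $u^0$ toward $u^p$, the edge directions $u^{j-1} - u^j$ turn monotonically (this is exactly what the cyclic labeling buys us), so the quantities $s_{j-1} - s_j$, being the values of the linear functional $x \mapsto x_1 + x_2$ on successive edge vectors, are themselves monotone along the walk — they start at $s_0 - s_1 \in \{1,2\} > 0$ and can only decrease, but they must remain integers and, crucially, must eventually produce $s_p = 0$ while $s_2, \dots, s_{p-1}$ stay strictly positive (again by convexity, since the two endpoints of the walk that are ``extreme'' for the functional are $u^0$ and $u^p$).

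More concretely, the key step is: for $2 \le j < p$ the edge vector $e^j := u^{j-1} - u^j$ has both coordinates nonnegative and is not the zero vector, hence $s_{j-1} - s_j = e^j_1 + e^j_2 \ge 1$; and we need to upgrade this to $\ge 2$. The upgrade comes from the fact that $e^j$ is a \emph{primitive} lattice edge vector of a convex polygon whose boundary between $u^0$ and $u^p$ turns through a total angle less than $\pi/2$ in the first quadrant of directions (all edges point ``down-left'' into the region $x_1,x_2 \ge 0$, weakly), so at most one of these edge vectors can be a coordinate vector $(1,0)$ or $(0,1)$ — and by the hypothesis that is $e^1 = u^0 - u^1$ when $u^0 - u^1 \ne (1,1)$. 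Every subsequent edge $e^j$ with $j \ge 2$ therefore satisfies $e^j_1 \ge 1$ \emph{and} $e^j_2 \ge 1$ (a primitive vector with both coordinates positive has coordinate sum $\ge 2$), or else $e^j$ equals $e^1$ up to nothing — here convexity forbids repeating an edge direction — giving $s_{j-1} - s_j \ge 2$ as required. I would make ``turns monotonically'' precise by using that the vertices are labeled cyclically around the polygon, so for a fixed orientation the cross products $e^{j-1} \times e^j$ all have the same sign, which prevents two distinct edges among $e^1, e^2, \dots$ from both being axis-parallel while still fitting inside a quarter-turn of directions.

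The main obstacle I anticipate is the bookkeeping around \emph{which} quadrant of edge directions we are in and ruling out the degenerate possibilities cleanly: one must check that along the sub-path $u^0, u^1, \dots, u^p$ the edge vectors $u^{j-1}-u^j$ really do all lie in the closed first quadrant (this uses minimality of $s_p = 0$ together with the labeling, so that $u^0$ and $u^p$ are consecutive ``extreme'' vertices for $x_1 + x_2$ and the path between them is the monotone arc), and that the only axis-parallel one is the very first. A careful case split on whether $u^0 - u^1$ is $(1,1)$ versus a coordinate vector handles the base of the argument, and then a short induction on $j$ — maintaining the invariant that $e^j$ has strictly positive entries for $j \ge 2$ — finishes it. This is the step where I would be most careful to invoke the cyclic labeling hypothesis explicitly rather than hand-wave ``by convexity''.
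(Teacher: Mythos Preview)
Your first paragraph contains a genuine error: the claim that the increments $s_{j-1}-s_j$ are monotone along the walk is false. Monotone rotation of the edge \emph{directions} says nothing about monotonicity of the linear functional $x_1+x_2$ evaluated on the edge \emph{vectors}, because edge lengths vary freely. (Consecutive lattice edges $(1,1)$, $(2,1)$, $(1,0)$ rotate monotonically but give increments $2,3,1$.) Fortunately you never actually use this claim downstream.

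Your second paragraph has the right idea and is essentially what the paper does: for $2\le j<p$ both coordinates of $e^j=u^{j-1}-u^j$ are at least $1$, hence $s_{j-1}-s_j\ge2$. Two imprecisions to fix. First, edge vectors of a lattice polygon are not primitive in general (take the edge from $(0,0)$ to $(2,0)$); drop that word, it is not needed anyway since any lattice vector with both coordinates strictly positive already has coordinate sum at least $2$. Second, ``at most one of these edge vectors can be a coordinate vector'' is wrong as stated: the arc can carry one horizontal \emph{and} one vertical edge. What you need (and what the monotone rotation actually gives) is that an axis-parallel $e^j$ can only occur at an endpoint of the rotation, i.e.\ at $j=1$ or $j=p$, which is exactly why the range $2\le j<p$ in the statement is the right one.

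The paper avoids the rotating-directions bookkeeping by a cleaner global picture: since $u^p=(0,0)$ the tangent cone there lies in the positive orthant, and since $u^0-u^1\in\{(1,0),(0,1),(1,1)\}$ the tangent cone at $u^0$ lies in a translate of the negative orthant; hence the whole polygon sits in the rectangle $[0,u^0_1]\times[0,u^0_2]$ with $u^0$ and $u^p$ at opposite corners. By convexity the only horizontal or vertical edges of the polygon are those incident to $u^0$ or $u^p$, so every intermediate edge on the arc is strictly decreasing in both coordinates. Same conclusion as yours, but the case analysis you flag in your last paragraph disappears.
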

\begin{proof}
Note that the cone pointed at $u^p$ and formed by the incident edges is contained in the positive orthant. Assuming that $u^0-u^1$ is either $(1,0)$, $(0,1)$, or $(1,1)$, the corresponding cone pointed at $u^0$ is contained in a translation of the negative orthant.
\begin{figure}[b]
\begin{centering}
\includegraphics{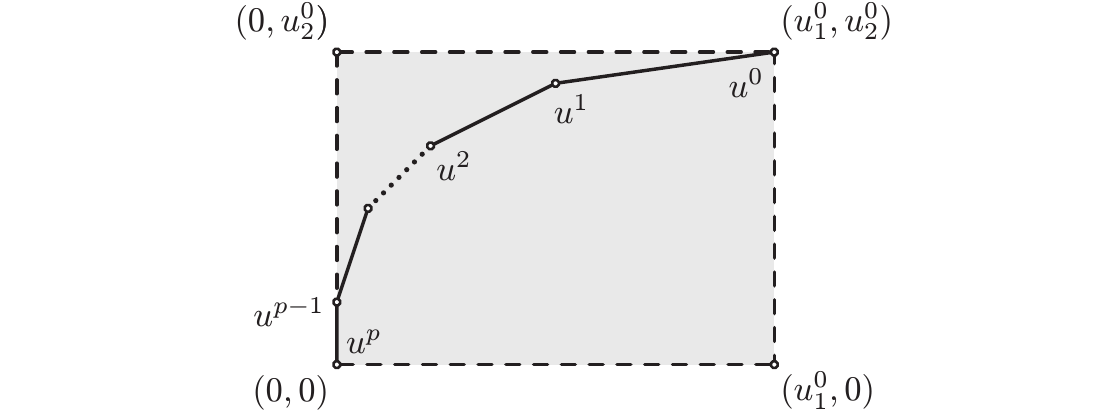}
\caption{A sketch of the lattice polygon with vertices $u^0$, ..., $u^p$.}\label{Fig.ADLP.1}
\end{centering}
\end{figure}
As a consequence, the polygon is inscribed in the rectangle $[0,u^0_1]\times[0,u^0_2]$. This situation is illustrated by Figure~\ref{Fig.ADLP.1} when the vertices of are labeled counter-clockwise.

Now observe that, by convexity, the only edges of the polygon that are possibly horizontal or vertical are incident to $u_0$ or to $u_p$. Hence, $u^j_1+1\leq{u^{j-1}_1}$ and $u^j_2+1\leq{u^{j-1}_2}$ for all $i\in\{2, ..., p-1\}$.
\end{proof}

\section{The inductive step}\label{Sec.ADLP.Theo}
The proof of Theorem~\ref{UB+} is done by induction on the dimension. The inductive step is split into two main cases, addressed by Lemmas~\ref{Lem.ADLP.3} and~\ref{Lem.ADLP.4}.
\begin{lem}\label{Lem.ADLP.3}
Let $P$ be a lattice $(d,k)$-polytope such that $d\geq3$ and $k\geq3$. Let $u$ and $v$ be two vertices of $P$ such that $u_i+v_i=k$ for all $i\in\{1,...,d\}$. If there exists a vertex $w$ adjacent to $u$ in the graph of $P$ such that  $w-u$ has at least two non-zero coordinates, then one of the following inequalities holds:
\begin{enumerate}[label=$(\roman*)$]
\item $d(u,v)\leq\delta(d-1,k)+k-1$,
\item $d(u,v)\leq\delta(d-2,k)+2k-2$,
\item $d(u,v)\leq\delta(d-3,k)+3k-2$.
\end{enumerate}
\end{lem}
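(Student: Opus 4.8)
The plan is to exploit the extra vertex $w$ adjacent to $u$ with at least two non-zero coordinates in $w-u$, and to feed a carefully chosen index set $I$ into Lemma~\ref{Lem.ADLP.1}. Since $u_i+v_i=k$ for every $i$, any subset $I$ of $\{1,\dots,d\}$ with $|I|\leq 3$ satisfies the hypothesis of Lemma~\ref{Lem.ADLP.1}, giving
$$
d(u,v)\leq\delta(d-|I|,k)+k|I|\mbox{.}
$$
Taking $|I|=1$, $2$, or $3$ yields the bounds $\delta(d-1,k)+k$, $\delta(d-2,k)+2k$, $\delta(d-3,k)+3k$; the task is to shave off the extra $1$ or $2$ using $w$. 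The key refinement is Remark~\ref{Rem.ADLP.1}: if $1\leq|I|\leq 2$ and $\sum_{i\in I}x_i>0$ for all $x\in P$, then the inequality of Lemma~\ref{Lem.ADLP.1} is strict, improving it by at least $1$ (and, chasing the proof, by $(3-|I|)\gamma$ with $\gamma\geq 1$ an integer, so by $2$ when $|I|=1$ and $\gamma\geq 1$, or when $|I|=2$ and $\gamma\geq 1$ we gain $1$).

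First I would use $w$ to locate coordinates that are bounded away from $0$. Let $J=\{i:w_i-u_i\neq 0\}$, so $|J|\geq 2$ by hypothesis. The idea is that since $w$ is adjacent to $u$ and $P\subseteq\{0,\dots,k\}^d$, moving from $u$ along the edge $uw$ changes several coordinates simultaneously; together with the antipodal-type condition $u_i+v_i=k$, this forces positivity of partial coordinate sums on all of $P$ for suitable index sets. Concretely, for indices $i\in J$ one expects either $x_i\geq 1$ throughout $P$ or $x_i\leq k-1$ throughout $P$ (otherwise $u$ and $w$ could not both lie in the cube with $uw$ an edge in the given direction). Picking $I\subseteq J$ of size $1$ or $2$ so that $\sum_{i\in I}x_i\geq 1$ on $P$ — replacing $x_i$ by $k-x_i$ and $u,v$ by their reflections if necessary, which preserves all hypotheses — Remark~\ref{Rem.ADLP.1} then upgrades the bound. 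With $|I|=1$ this gives $(i)$ after noting $\gamma\geq 1$; with $|I|=2$ one gets $d(u,v)\leq\delta(d-2,k)+2k-1$, which is stronger than $(ii)$, or in the worst sub-case exactly $(ii)$; with $|I|=3$ one falls back to Lemma~\ref{Lem.ADLP.1} directly for the bound $\delta(d-3,k)+3k$, and a separate argument using that $w$ contributes a ``free'' step (so that $d(u,v)\leq d(w,v)+1$ and $w$ sits one unit closer along two coordinate directions) trims it to $3k-2$, giving $(iii)$.

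The case split will be organized by how much positivity the vertex $w$ buys: if two of the coordinates in $J$ can be made to satisfy $x_i\geq 1$ on $P$ we land in $(i)$ or $(ii)$; if the geometry forbids this — i.e. some coordinate in $J$ takes the value $0$ and another takes the value $k$ somewhere on $P$ — then one shows $P$ still projects nicely and a size-$3$ index set combined with the edge $uw$ yields $(iii)$. The main obstacle I anticipate is precisely this bookkeeping: translating ``$w-u$ has $\geq 2$ non-zero coordinates'' into a clean statement about which partial coordinate sums are bounded away from $0$ (or from $k$) on the \emph{whole} polytope, not merely at $u$ and $w$. This requires using convexity and the edge structure carefully — an edge direction $w-u$ with two non-zero entries, say both positive, forces $u_i=0$ for those $i$ (else one could extend the edge past $u$ and leave the cube), and dually for $v$, which is the lever that makes the partial sums positive. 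Once that dictionary is set up, the three conclusions follow by selecting $|I|=1,2,3$ and applying Lemma~\ref{Lem.ADLP.1} together with Remark~\ref{Rem.ADLP.1}.
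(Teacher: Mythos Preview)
Your central geometric claim is wrong, and this is the load-bearing step. You assert that because $uw$ is an edge of $P$ and $w-u$ has (say) two positive entries, ``one could extend the edge past $u$ and leave the cube'' unless $u_i=0$ for those $i$, and hence that $x_i\geq1$ (or $x_i\leq k-1$) throughout $P$. But $u$ is a vertex of $P$, not of the cube: the segment $uw$ terminates at $u$ because of the face structure of $P$, and nothing forces $u$ to sit on a facet of $[0,k]^d$. Concretely, one can have $u=(1,1,\dots)$, $w=(0,0,\dots)$, and still have the origin in $P$; then $\gamma=\min\{x_{i_1}+x_{i_2}:x\in P\}=0$ and Remark~\ref{Rem.ADLP.1} gives you nothing. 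The paper's proof explicitly treats the case where the face $F=\{x\in P:x_{i_1}+x_{i_2}=0\}$ is non-empty as the main difficulty.

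You do briefly mention the correct lever, namely $d(u,v)\leq d(w,v)+1$ with $w$ one unit closer to $v$ in two coordinates, but you undercount what it buys. Applying Lemma~\ref{Lem.ADLP.1} to $(w,v)$ with $I=\{i_1,i_2\}$ gives only $d(w,v)\leq\delta(d-2,k)+2k-2$, hence $d(u,v)\leq\delta(d-2,k)+2k-1$, which is \emph{not} $(ii)$. The paper recovers the missing unit via Remark~\ref{Rem.ADLP.1} when $F$ is empty; when $F$ is non-empty, it runs simplex-type monotone paths from $u$ (through $w$) and from $v$ down to $F$, and uses the polygon Lemma~\ref{Lem.ADLP.1.5} to show that either these paths are short (length $\leq2$ each, so $d(u,v)\leq\delta(F)+4\leq\delta(d-2,k)+2k-2$), or some step of a path drops $x_{i_1}+x_{i_2}$ by at least $2$, or a path leaves the $2$-plane through $u$ (resp.\ $v$), producing a third index $i_3$ and yielding $(iii)$ via $|I|=3$. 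None of this machinery appears in your outline, and without it the argument does not close.
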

\begin{proof}
Assume that there exists a vertex $w$ adjacent to $u$ in the graph of $P$ such that  $w-u$ has at least two non-zero coordinates. For any index $j\in\{1, ..., d\}$ such that $u_j\neq{w_j}$, we can require that $w_j<u_j$ by if needed, replacing $P$ by its symmetric with respect to the hyperplane $\{x\in\mathbb{R}^d:x_j=k/2\}$.

First assume that $u_j-w_j\geq2$ for some index $j\in\{1, ..., d\}$. In this case, $v_j+w_j\leq{k-2}$, and invoking Lemma~\ref{Lem.ADLP.1} with $I=\{j\}$ yields
$$
d(v,w)\leq\delta(d-1,k)+k-2\mbox{.}
$$

As $u$ and $w$ are adjacent in the graph of $P$, one then obtains $(i)$ from the triangle inequality. We therefore assume in the remainder of the proof that $0\leq{u_j-w_j}\leq1$ for all $j\in\{1, ..., d\}$.

Let $i_1$ and $i_2$ be distinct indices such that $u_{i_1}=w_{i_1}+1$ and $u_{i_2}=w_{i_2}+1$. Invoking Lemma~\ref{Lem.ADLP.1} with $I=\{i_1,i_2\}$ yields
\begin{equation}\label{Lem.ADLP.3.eq.1}
d(v,w)\leq\delta(d-2,k)+2k-2\mbox{.}
\end{equation}

According to Remark~\ref{Rem.ADLP.1}, if 
$$
F=\{x\in{P}:x_{i_1}+x_{i_2}=0\}
$$
is empty, then (\ref{Lem.ADLP.3.eq.1}) is strict. In this case, one obtains $(ii)$ from the triangle inequality because $u$ is adjacent to $w$ in the graph of $P$. In the sequel, we will further assume that $F$ is non-empty. In particular, $F$ is a non-empty face of $P$ of dimension at most $d-2$. Consider a sequence $u^0$, ..., $u^p$ of vertices of $P$ that forms a path from $u$ to $F$ in the graph of $P$. In other words, $u^0=u$, $u^p\in{F}$, and $u^{j-1}$ is adjacent to $u^j$ in the graph of $P$ whenever $0<j\leq{p}$. It can be assumed that for all $j\in\{1, ..., p\}$, the following inequality holds:
\begin{equation}\label{Lem.ADLP.3.eq.2}
u^j_{i_1}+u^j_{i_2}\leq{u^{j-1}_{i_1}+u^{j-1}_{i_2}-1}\mbox{.}
\end{equation}

For instance, such a path is provided by the simplex algorithm when minimizing $x_{i_1}+x_{i_2}$ from vertex $u$ under the constraint $x\in{P}$. It can also be required that $u^1=w$. Note that, because of this requirement, inequality (\ref{Lem.ADLP.3.eq.2}) is strict when $j=1$. Denote by $S_u$ the square made up of the points $x\in[0,k^d]$ so that $x_i=u^0_i$ whenever $i\in\{1,...,d\}\setminus\{i_1,i_2\}$. We will now review two cases depending on whether the path $u^0$, ..., $u^p$ remains in $S_u$ or not. In each case, we will prove that $(i)$, $(ii)$ or $(iii)$ holds.

Assume that the path $u^0$, ..., $u^p$ does not remain within $S_u$. In this case, there exists an index $i_3\in\{1,...,d\}\setminus\{i_1,i_2\}$ such that $u^r_{i_3}\neq{u^r_{i_3}}$ for some index $r\in\{1, ..., p\}$. Assume that $r$ is the smallest such index, or equivalently that vertices $u^0$ to $u^{r-1}$ all belong to $S_u$. As above, we can require that $u^r_{i_3}<u^0_{i_3}$ by if needed, replacing $P$ by its symmetric with respect to the hyperplane $\{x\in\mathbb{R}^d:x_{i_3}=k/2\}$. Recall that inequality (\ref{Lem.ADLP.3.eq.2}) holds whenever $1\leq{j}\leq{r}$, and is strict when $j=1$. As in addition, $u^r_{i_3}<u^0_{i_3}$, we have:
$$
\sum_{i\in{I}}(u_i^r+v_i)\leq3k-r-2\mbox{,}
$$
where $I=\{i_1,i_2,i_3\}$. Hence, by Lemma~\ref{Lem.ADLP.1},
$$
d(u^r,v)\leq\delta(d-3,k)+3k-r-2\mbox{.}
$$
As $d(u,u^r)$ is at most $r$, one obtains $(iii)$ from the triangle inequality.

Now assume that the path $u^0$, ..., $u^p$ remains within $S_u$. In this case, $u^0$ to $u^p$ are, up to an affine transformation, the vertices of a lattice $(2,k)$-polygon satisfying the requirements of Lemma~\ref{Lem.ADLP.1.5}. In particular, if $p\geq3$, then Lemma~\ref{Lem.ADLP.1.5} yields $u^2_{i_1}+u^2_{i_2}+2\leq{u^1_{i_1}+u^1_{i_2}}$. As a consequence,
$$
\sum_{i\in{I}}(u^2_i+v_i)\leq2k-4\mbox{,}
$$
where $I=\{i_1,i_2\}$, and by Lemma~\ref{Lem.ADLP.1},
$$
d(u^2,v)\leq\delta(d-2,k)+2k-4\mbox{.}
$$

As $d(u,u^2)\leq2$, one obtains $(ii)$ from the triangle inequality. We therefore assume that $p\leq2$ from now on.

Consider a sequence $v^0$, ..., $v^q$ of vertices of $P$ that forms a path from $v$ to $F$ in the graph of $P$. In other words, $v^0=v$, $v^q\in{F}$, and $v^{j-1}$ is adjacent to $v^j$ in the graph of $P$ whenever $0<j\leq{q}$. It can be required that for all $j\in\{1, ..., p\}$, the following inequality holds:
\begin{equation}\label{Lem.ADLP.3.eq.3}
v^j_{i_1}+v^j_{i_2}\leq{v^{j-1}_{i_1}+v^{j-1}_{i_2}-1}\mbox{,}
\end{equation}
by assuming, for instance, that this path is provided by the simplex algorithm when minimizing $x_{i_1}+x_{i_2}$ from vertex $v$ under the constraint $x\in{P}$. Denote by $S_v$ the square made up of the points $x\in[0,k^d]$ so that $x_i=v^0_i$ whenever $i\in\{1,...,d\}\setminus\{i_1,i_2\}$. We proceed as with sequence $u^0$, ..., $u^p$ and review two sub-cases depending on whether $v^0$, ..., $v^q$ all belong to $S_v$ or not.

Assume that vertices $v^0$, ..., $v^q$ do not all belong to $S_v$. In this case, there exists $i_3\in\{1,...,d\}\setminus\{i_1,i_2\}$ such that $v^r_{i_3}\neq{v^r_{i_3}}$ for some index $r\in\{1, ..., q\}$. Assume that $r$ is the smallest such index. In particular, vertices $v^0$ to $v^{r-1}$ all belong to $S_v$. We can again require that $v^r_{i_3}<v^0_{i_3}$ by if needed, replacing $P$ by its symmetric with respect to the hyperplane $\{x\in\mathbb{R}^d:x_{i_3}=k/2\}$.

As inequality (\ref{Lem.ADLP.3.eq.3}) holds whenever $1\leq{j}\leq{r}$, as $w_{i_1}+w_{i_2}\leq{k-2}$, and as $v^r_{i_3}<v^0_{i_3}$, we obtain the following:
$$
\sum_{i\in{I}}(v_i^r+w_i)\leq3k-r-3\mbox{,}
$$
where $I=\{i_1,i_2,i_3\}$. Therefore, Lemma~\ref{Lem.ADLP.1} yields:
$$
d(v^r,w)\leq\delta(d-3,k)+3k-r-3\mbox{.}
$$

Since $d(v,v^r)$ is at most $r$, and since $w$ is adjacent to $u$ in the graph of $P$, one obtains $(iii)$ from the triangle inequality.

Now assume that all the vertices $v^0$, ..., $v^q$ belong to $S_v$. Observe that if $v^0_{i_1}\geq{v^1_{i_1}+2}$ or $v^0_{i_2}\geq{v^1_{i_2}+2}$, then using $I=\{i_1\}$ in the former case and $I=\{i_2\}$ in the latter, Lemma~\ref{Lem.ADLP.1} immediately provides inequality $(i)$. We therefore assume that the differences $v^0_{i_1}-v^1_{i_1}$ and $v^0_{i_2}-v^1_{i_2}$ are both at most $1$. By (\ref{Lem.ADLP.3.eq.3}), the sum of these differences is at least $1$, and each of them must therefore be non-negative. In this case, $v^0$ to $v^q$ are, up to an affine transformation, the vertices of a lattice $(2,k)$-polygon satisfying the requirements of Lemma~\ref{Lem.ADLP.1.5}. In particular, if $q\geq3$, then Lemma~\ref{Lem.ADLP.1.5} yields $v^2_{i_1}+v^2_{i_2}+2\leq{v^1_{i_1}+v^1_{i_2}}$.

As a consequence,
$$
\sum_{i\in{I}}(v^2_i+w_i)\leq2k-5\mbox{,}
$$
where $I=\{i_1,i_2\}$, and by Lemma~\ref{Lem.ADLP.1},
$$
d(v^2,w)\leq{\delta(d-2,k)+2k-5}\mbox{.}
$$

As $d(v,v^2)\leq2$ and $d(u,w)=1$, inequality $(ii)$ is again obtained by using the triangle inequality, and we assume that $q\leq2$.

We have narrowed the possibilities to $p\leq2$ and $q\leq2$. Hence,
$$
d(u,v)\leq\delta(F)+4\mbox{.}
$$

As $F$ is a lattice $(d-2,k)$-polytope and as $k\geq3$, the right-hand side of this inequality is bounded above by $\delta(d-2,k)+2k-2$. Therefore, $(ii)$ holds.
\end{proof}

\begin{lem}\label{Lem.ADLP.4}
Let $P$ be a lattice $(d,k)$-polytope with $d\geq3$ and $k\geq3$. Let $u$ and $v$ be two vertices of $P$. If both $u$ and $v$ belong to $\{0,k\}^d$, and $u_i+v_i=k$ for all $i\in\{1,...,d\}$, then one of the following inequalities holds:
\begin{enumerate}[label=$(\roman*)$]
\item $d(u,v)\leq\delta(d-1,k)+k-1$,
\item $d(u,v)\leq\delta(d-2,k)+2k-2$,
\item $d(u,v)\leq\delta(d-3,k)+3k-2$.
\end{enumerate}
\end{lem}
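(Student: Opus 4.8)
The plan is to reduce \lemref{Lem.ADLP.4} to a situation handled by \lemref{Lem.ADLP.3}. The hypothesis now is stronger --- $u$ and $v$ are opposite vertices of the cube $\{0,k\}^d$ --- but we have lost the explicit adjacency information about a vertex $w$ leaving $u$. So the natural strategy is: look at any vertex $w$ of $P$ adjacent to $u$ in the graph of $P$, and distinguish according to how many coordinates of $w-u$ are non-zero. After a coordinate reflection (replacing $P$ by its symmetric with respect to $\{x_j=k/2\}$ for each $j$ with $u_j=k$) we may assume $u=(0,\dots,0)$ and $v=(k,\dots,k)$, so that every coordinate of $w$ is non-negative and in fact $w_i\in\{0,\dots,k\}$.

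First I would dispose of the case where $w-u$ has at least two non-zero coordinates: that is exactly the hypothesis of \lemref{Lem.ADLP.3}, whose conclusion is precisely the trichotomy $(i)$--$(iii)$ we want, so we are done immediately. Hence we may assume that \emph{every} vertex $w$ of $P$ adjacent to $u$ differs from $u$ in exactly one coordinate; say the neighbours of $u$ change coordinates in an index set $J\subseteq\{1,\dots,d\}$, i.e. each edge at $u$ is parallel to a coordinate axis $e_j$, $j\in J$. If some such edge has lattice length at least $2$, i.e. $w_j\geq 2$ for a neighbour $w=u+w_j e_j$, then $v_j+w_j=k+w_j\geq k+2$? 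No --- here $v_j=k$, so instead we use that $u_j+w_j = w_j\le k$ and apply \lemref{Lem.ADLP.1} with $I=\{j\}$ to the pair $(w,v)$: since $w_j+v_j\ge 2$ and the segment from $w$ has... more cleanly, minimize $x_j$ over $P$ from $v$; the face $F=\{x\in P: x_j=0\}$ contains $u$, and \lemref{facet} gives $d(v,u)\le d(v,F)\le \delta(F)+k$, and since $F$ is (up to affine transformation) a lattice $(d-1,k)$-polytope we can improve the bound using that $u$ sits on $F$. The cleanest route is: apply \lemref{Lem.ADLP.1} with $I=\{j\}$ to $u$ and $v$ directly --- but $u_j+v_j=k$, not $<k$, so \lemref{Lem.ADLP.1} only gives $d(u,v)\le\delta(d-1,k)+k$, which is one too weak. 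The gain of $1$ must come from the edge of length $\ge 2$: replace $u$ by its neighbour $w$ along that edge. Then $d(w,v)\le\delta(d-1,k)+(w_j+v_j)-? $; since the path minimizing $x_j$ from $w$ can be taken to decrease $x_j$ and $w_j\ge2$ while $v_j=k$... the point is that $c\cdot(w+v)-2\gamma = w_j+k-0$, giving no gain. I expect the actual argument instead reflects so that the long edge points the \emph{other} way, making $w_j$ small, or uses \lemref{Lem.ADLP.0} on a coordinate slab: since $u,v\in\{0,k\}^d$, the coordinates of $P$ need not be confined, so that is unavailable. The resolution, and the step I expect to be the main obstacle, is the case where every edge at $u$ has lattice length exactly $1$ and lies along a coordinate axis.

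In that remaining case all neighbours of $u$ are of the form $e_j$, $j\in J$. I would then argue that the edge graph near $u$, together with the fact that $P$ is full-dimensional near no particular constraint, forces a second vertex $w'$ adjacent to $u$ with $w'=e_{j'}$ for a \emph{different} $j'$ (since $\dim P=d\ge3$, $u$ has at least $d\ge 3$ neighbours, so $|J|\ge3$). Pick three of them, $e_{i_1},e_{i_2},e_{i_3}$. Now run the simplex-style argument of \lemref{Lem.ADLP.3}: take a monotone path from $u$ to the face $F=\{x\in P:x_{i_1}+x_{i_2}=0\}$ decreasing $x_{i_1}+x_{i_2}$, starting with the edge to $e_{i_1}$ (or $e_{i_2}$), so the first step already has $u^0-u^1$ equal to one of $(1,0),(0,1)$ in those two coordinates; \lemref{Lem.ADLP.1.5} then applies verbatim inside the square $S_u$, and the "leaves the square" branch invokes \lemref{Lem.ADLP.1} with $I=\{i_1,i_2,i_3\}$ exactly as before, the extra coordinate $i_3$ supplying the needed decrease because $u_{i_3}=0$ is already minimal so any departure is strict. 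Symmetrically bound the $v$-side using that $v\in\{0,k\}^d$ gives $v_{i}\in\{0,k\}$ so $v_{i_1}+v_{i_2}\le 2k$ with the path decreasing it. Combining $p\le 2$, $q\le 2$ with $d(u,v)\le\delta(F)+p+q\le\delta(d-2,k)+4\le\delta(d-2,k)+2k-2$ (using $k\ge3$) yields $(ii)$. The main obstacle is verifying that one can always find a coordinate-axis neighbour distinct in enough coordinates to set up the three-index application cleanly --- equivalently, ruling out pathological local structure at $u$ --- and checking the bookkeeping in the "$v$ leaves $S_v$" sub-case, where one must combine a strict decrease from the $i_3$ coordinate with the $w$-side slack $w_{i_1}+w_{i_2}\le k-2$ to land at $\le 3k-r-3$ and hence $(iii)$.
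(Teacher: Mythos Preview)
Your opening reduction is valid and worth noting: if some neighbour $w$ of $u$ (or, by symmetry, of $v$) satisfies that $w-u$ has at least two non-zero coordinates, then the hypotheses of \lemref{Lem.ADLP.3} are met verbatim and you are done. So the only case left is when \emph{every} edge at $u$ and every edge at $v$ is parallel to a coordinate axis. This case is genuine (take $P=[0,k]^d$), and it is precisely here that your argument breaks.

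With $u=0$, the face $F=\{x\in P:x_{i_1}+x_{i_2}=0\}$ already contains $u$: the sum $x_{i_1}+x_{i_2}$ is at its minimum at $u$, so there is no ``monotone path from $u$ to $F$ decreasing $x_{i_1}+x_{i_2}$'', and the first step you propose, from $u$ to $e_{i_1}$, \emph{increases} that sum. The machinery of \lemref{Lem.ADLP.1.5} and the $S_u$-analysis from \lemref{Lem.ADLP.3} therefore do not apply on the $u$ side at all. If you instead run the descent only from $v$ (where $v_{i_1}+v_{i_2}=2k$) and combine with $u\in F$, a careful bookkeeping shows you land at $d(u,v)\le\delta(d-2,k)+2k-1$ in the ``$q\ge3$, path stays in $S_v$'' sub-case and at $d(u,v)\le\delta(d-3,k)+3k-1$ in the ``path leaves $S_v$'' sub-case --- each one unit too weak. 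The missing unit in \lemref{Lem.ADLP.3} came from the neighbour $w$ of $u$ with $w_{i_1}+w_{i_2}\le u_{i_1}+u_{i_2}-2$, and that is exactly what is unavailable when every edge at $u$ is axis-parallel.

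The paper does not attempt this reduction. After normalising to $u=0$, $v=(k,\dots,k)$, it fixes the facet $F=\{x\in P:x_1=0\}$ (which contains $u$), uses \lemref{facet} to get $d(v,F)\le k$, and then case-splits on a vertex $w\in F$ with $d(v,w)\le k$ according to how many of $w_2,\dots,w_d$ are strictly below $k$. The point is that $w_1=u_1=0$, so the index $1$ can be thrown into $I$ in \lemref{Lem.ADLP.1} at zero cost; if $w$ has two further coordinates below $k$ one gets $(iii)$, if exactly one then $(i)$ or $(ii)$ depending on whether that coordinate is $\le k-2$ or $=k-1$, and if none then $[v,w]$ is an edge and $(i)$ follows from $d(u,v)\le\delta(F)+1$. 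This extra ``free'' index is what supplies the unit you are missing.
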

\begin{proof} 
Assume that $u\in\{0,k\}^d$, $v\in\{0,k\}^d$, and $u_i+v_i=k$ whenever $1\leq{i}\leq{d}$. Consider an index $j\in\{1,...,d\}$. We can assume without loss of generality that $u_j=0$ and $v_j=k$ by, if needed, replacing $P$ by its symmetric with respect the the hyperplane $\{x\in\mathbb{R}^d:x_j=k/2\}$. Repeating this for all coordinates, we can therefore require that $u_i=0$ and $v_i=k$ for all $i\in\{1,...,d\}$.

Let $F=\{x\in{P}:x_1=0\}$. Observe that $d(v,F)\leq{k}$. This inequality is obtained, for instance, by invoking Lemma~\ref{facet} with the vector $c$ so that $c_i$ is equal to $1$ when $i=1$ and to $0$ otherwise. We will review three cases, depending on which vertices of $F$ are distance at most $k$ from $v$ in the graph of $P$.

First assume that there exists a vertex $w$ of $F$ so that $d(v,w)\leq{k}$ and $w$ has at least two coordinates distinct from $k$ other than $w_1$. Let $i_1$ and $i_2$ be two distinct indices in $\{2, ..., d\}$ so that $w_{i_1}<k$ and $w_{i_2}<k$. Let $G=\{x\in{F}:x_{i_1}+x_{i_2}=0\}$. In this case,
$$
\sum_{i\in{I}}(u_i+w_i)\leq2k-2\mbox{,}
$$
where $I=\{1,i_1,i_2\}$. Hence, by Lemma~\ref{Lem.ADLP.1},
$$
d(u,w)\leq\delta(d-3,k)+2k-2\mbox{.}
$$

As $d(v,w)\leq{k}$, using the triangle inequality provides $(iii)$.

Now assume that there exists a vertex $w$ of $F$ so that $d(v,w)\leq{k}$ and $w$ has exactly one coordinate distinct from $k$ other than $w_1$. Let $j\in\{2, ..., d\}$ be an index so that $w_j<k$. We consider two sub-cases depending on the value of $w_j$. First assume that $w_j\leq{k-2}$. In this case, one obtains the following inequality by invoking Lemma~\ref{Lem.ADLP.1} with $I=\{j\}$:
$$
d(u,w)\leq\delta(d-2,k)+k-2\mbox{,}
$$

As $d(v,w)\leq{k}$, the triangle inequality then provides $(ii)$ because $d(v,w)\leq{k}$. Now assume that $w_j=k-1$. In this case, consider face $G$ of $P$ made up of all the points $x\in{P}$ so that $x_i=k$ when $i\in\{2, ..., d\}\setminus\{j\}$. Note that $G$ is at most $2$-dimensional and at least $1$-dimensional because it contains both $v$ and $w$. In other words, $G$ is either an edge of $P$, or one of its polygonal faces.

Since $v_j=k$ and $w_j=k-1$, $v$ and $w$ necessarily have distance at most $2$ in the graph of $G$. Indeed, either they are adjacent in this graph, or there exists a unique vertex $x$ of $G$, such that $x_j=k$ and $1\leq{x_1}<k$. There cannot be another such vertex because it would be collinear with $x$ and $v$. The vertices of $G$ adjacent to $x$ are then $v$ and $w$, and their distance is at most $2$.

As a consequence,
$$
d(u,v)\leq\delta(d-1,k)+2\mbox{.}
$$

Since $k\geq3$, inequality $(i)$ follows.

Finally, assume that the unique vertex $w$ of $F$ such that $d(v,w)\leq{k}$ satisfies $w_1=0$ and $w_i=k$ when $2\leq{i}\leq{d}$. In this case, the segment with vertices $v$ and $w$ is an edge of $P$. Hence, $d(v,F)=1$ and $d(u,v)\leq\delta(d-1,k)+1$. As $k\geq3$, inequality $(i)$ holds, which completes the proof.
\end{proof}

Combining Lemmas~\ref{Lem.ADLP.3} and~\ref{Lem.ADLP.4}, one obtains Theorem~\ref{Theo} that provides the inductive step for the proof of Theorem~\ref{UB+}:

\begin{thm}\label{Theo}
Assume that $d\geq3$ and $k\geq3$. If $u$ and $v$ are two vertices of a lattice $(d,k)$-polytope $P$, then one of the following inequalities holds:
\begin{enumerate}[label=$(\roman*)$]
\item $d(u,v)\leq\delta(d-1,k)+k-1$,
\item $d(u,v)\leq\delta(d-2,k)+2k-2$,
\item $d(u,v)\leq\delta(d-3,k)+3k-2$.
\end{enumerate}
\end{thm}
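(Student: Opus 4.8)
The plan is to deduce Theorem~\ref{Theo} from Lemmas~\ref{Lem.ADLP.3} and~\ref{Lem.ADLP.4} by reducing the general case of two arbitrary vertices $u$ and $v$ to the situations already handled. The obvious first move is to exploit the symmetries of the cube $\{0,\dots,k\}^d$: for each coordinate $i$, replacing $P$ by its reflection across $\{x_i=k/2\}$ preserves the diameter, the graph, and the property of being a lattice $(d,k)$-polytope. So I would like to arrange, after suitable reflections, that $u$ and $v$ are ``as opposite as possible'' in $P$. Concretely, I would first dispose of every coordinate $i$ for which $u_i+v_i\neq k$: if $u_i+v_i<k$ for some $i$ (after possibly reflecting so that this holds rather than $u_i+v_i>k$ — note one of the two must fail to equal $k$ and reflection swaps $u_i+v_i$ with $2k-u_i-v_i$), then Lemma~\ref{Lem.ADLP.1} applied with $I=\{i\}$ gives $d(u,v)\leq\delta(d-1,k)+u_i+v_i\leq\delta(d-1,k)+k-1$, which is exactly~$(i)$. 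Hence I may assume from now on that $u_i+v_i=k$ for \emph{all} $i\in\{1,\dots,d\}$.

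With that normalization in place, the two lemmas of this section cover two sub-cases, and the job is to check these are exhaustive. Lemma~\ref{Lem.ADLP.3} handles the case where $u$ has a neighbour $w$ in the graph of $P$ with $w-u$ supported on at least two coordinates. Lemma~\ref{Lem.ADLP.4} handles the case where $u$ and $v$ both lie in $\{0,k\}^d$. So the remaining case to rule out — or rather, to push into Lemma~\ref{Lem.ADLP.3} — is: $u_i+v_i=k$ for all $i$, every edge of $P$ at $u$ moves only one coordinate, and yet $\{u,v\}\not\subseteq\{0,k\}^d$. I would argue that if every edge at $u$ is axis-parallel, then (since $u$ is a vertex, the edge directions positively span a full-dimensional cone, forcing all $d$ coordinate directions to occur among the edges at $u$) the vertex figure constraints force $u$ to be a vertex of the bounding box of $P$ in every coordinate; combined with $0\le u_i\le k$ this does not by itself give $u\in\{0,k\}^d$, so instead I would invoke Lemma~\ref{Lem.ADLP.1} directly: pick a coordinate $i$ where $u_i\notin\{0,k\}$, hence $1\le u_i\le k-1$ and likewise $v_i=k-u_i$, and examine the single axis-parallel edge of $P$ at $u$ in direction $\pm e_i$ to extract a sharper bound, or reflect so $u_i\le k/2$ and apply Lemma~\ref{Lem.ADLP.1} with $I=\{i,i'\}$ for a second bad coordinate $i'$.

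Let me restructure the case analysis more cleanly. After reducing to $u_i+v_i=k$ for all $i$: if $u$ has a neighbour $w$ with at least two non-zero coordinates in $w-u$, Lemma~\ref{Lem.ADLP.3} gives one of $(i)$–$(iii)$ and we are done. Otherwise every edge at $u$ is axis-parallel; since the edges at a vertex of a $d$-polytope affinely span $\mathbb{R}^d$, all $d$ signed unit directions that can occur do occur, and in particular for each $i$ there is an edge of $P$ at $u$ parallel to $e_i$. Travelling along such an edge changes $u_i$ by some amount while keeping $P\subseteq\{0,\dots,k\}^d$; one checks this forces $u_i\in\{0,k\}$ for every $i$ (otherwise $u$ would lie strictly between the two faces $\{x_i=\min\}$ and $\{x_i=\max\}$ of the box, contradicting that $u$ is an endpoint of an $e_i$-edge that must stay in the box in at least one direction — more carefully, one uses that both $\{x_i=0\}\cap P$ and $\{x_i=k\}\cap P$ would then be reachable and a short argument on the $e_i$-edge pins $u_i$ to an extreme value, or one simply applies Lemma~\ref{Lem.ADLP.1} to conclude $(i)$ as above). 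Thus $u\in\{0,k\}^d$, and since $v_i=k-u_i$, also $v\in\{0,k\}^d$, so Lemma~\ref{Lem.ADLP.4} applies and yields one of $(i)$–$(iii)$.

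The main obstacle I anticipate is precisely the ``intermediate'' case: two vertices with $u_i+v_i=k$ everywhere, all edges at $u$ axis-parallel, but $u$ not a $\{0,k\}$-vertex. The clean resolution is to avoid any delicate vertex-figure geometry and instead notice that \emph{if} such a coordinate $i$ with $1\le u_i\le k-1$ exists, then after reflecting across $\{x_i=k/2\}$ if necessary we get $u_i+v_i\le k-1<k$ for that coordinate — wait, that is not right since $u_i+v_i=k$ is reflection-invariant. So the honest resolution is: in this intermediate case pick such an $i$, follow the (unique, axis-parallel) edge of $P$ at $u$ in the $e_i$ direction to a neighbour $u'$ with $u'_i\ne u_i$; if $|u_i-u'_i|\ge 2$ then $v_i+u'_i\le k-2$ (after a reflection if needed) and Lemma~\ref{Lem.ADLP.1} with $I=\{i\}$ plus the triangle inequality through the edge $uu'$ gives $(i)$; if $|u_i-u'_i|=1$ for the edge in \emph{every} bad coordinate, a counting/parity argument combined with $P$ lying in the box forces $u\in\{0,k\}^d$ after all. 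Getting this last step airtight — ensuring the case split is genuinely exhaustive and that no configuration escapes all three lemmas — is where the real care is needed; everything else is the triangle inequality and bookkeeping with $k\ge 3$.
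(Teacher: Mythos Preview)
Your reduction to $u_i+v_i=k$ for all $i$ is exactly the paper's first move, and your dichotomy ``either $u$ has a non-axis-parallel neighbour (Lemma~\ref{Lem.ADLP.3}) or $u,v\in\{0,k\}^d$ (Lemma~\ref{Lem.ADLP.4})'' is the right target. But the intermediate case --- $u_i+v_i=k$ everywhere, every edge at $u$ axis-parallel, yet some $u_i\notin\{0,k\}$ --- is not actually closed. Your proposed resolutions do not work: an $e_i$-edge at $u$ certainly does \emph{not} pin $u_i$ to $0$ or $k$ (nothing prevents, say, $u_i=1$ with a single neighbour at $u+e_i$), there is no counting or parity argument that forces $u\in\{0,k\}^d$, and Lemma~\ref{Lem.ADLP.1} with $I=\{i\}$ gives nothing here since $u_i+v_i=k$ exactly. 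Your fallback through a neighbour $u'$ with $|u_i-u'_i|\ge 2$ is fine, but when every axis-parallel edge has unit step you are left with a simple vertex whose tangent cone is an orthant, and you stop short of extracting the bound.

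The missing tool is Lemma~\ref{Lem.ADLP.0}, which you never invoke: it bounds $\delta(P)$ (not $d(u,v)$) by $\delta(d-1,k)$ plus the \emph{range} of a coordinate over $P$. The paper's argument runs as follows. Pick $i$ with $0<u_i<k$. If every $x\in P$ satisfies $x_i\ge u_i$ (or all satisfy $x_i\le u_i$), then the $i$-th coordinate ranges over an interval of length at most $k-1$, and Lemma~\ref{Lem.ADLP.0} with $I=\{i\}$ yields $d(u,v)\le\delta(P)\le\delta(d-1,k)+k-1$, which is $(i)$. Otherwise $u$ has neighbours $w^-,w^+$ with $w^-_i<u_i<w^+_i$; if both $w^\pm-u$ were parallel to $e_i$ then $u$ would lie on the segment $[w^-,w^+]$, contradicting that $u$ is a vertex. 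Hence one of $w^\pm$ differs from $u$ in a second coordinate, and Lemma~\ref{Lem.ADLP.3} applies. This two-line argument (borrowed from Del~Pia--Michini) replaces all of your hedged case analysis in the intermediate situation.
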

\begin{proof}
Consider two vertices $u$ and $v$ of a lattice $(d,k)$-polytope $P$. Note that, if $u_j+v_j\neq{k}$ for some index $j\in\{1,...,d\}$, then we can assume without loss of generality that $u_j+v_j<k$ by, if needed, replacing $P$ by its symmetric with respect the the hyperplane $\{x\in\mathbb{R}^d:x_j=k/2\}$. In this case, invoking Lemma~\ref{Lem.ADLP.1} with $I=\{j\}$ provides inequality $(i)$. In the remainder of the proof we will assume that $u_i+v_i=k$ whenever $1\leq{i}\leq{d}$.

Assume that $0<u_i<k$ for some index $i\in\{0,..., d\}$. If $x_i\geq{u_i}$ for all $x\in{P}$, then, invoking Lemma~\ref{Lem.ADLP.0} with $I=\{i\}$, provides $(i)$. By tLemma~\ref{Lem.ADLP.0}, $(i)$ also holds when $x_i\leq{u_i}$ for all $x\in{P}$. Hence we can assume that there exist two vertices adjacent to $u$ in the graph of $P$ whose $i$-th coordinates are respectively less and greater than $u_i$. As argued in~\cite{DelPiaMichini2016}, there exists an index $j\in\{1,...,d\}$ distinct from $j$ so that one of these two vertices has a $j$-th coordinate distinct $u_j$. Indeed, $u$ would otherwise be contained in the segment bounded by these vertices. In this case, the result follows from Lemma~\ref{Lem.ADLP.3}.

By the same argument, the desired result also holds when $0<v_i<k$ for some index $i\in\{0,..., d\}$. Finally, if $u$ and $v$ both belong to $\{0,k\}^d$, then Theorem~\ref{UB+}is a direct consequence of Lemma~\ref{Lem.ADLP.4}.
\end{proof}

\section{Proofs of Theorems~\ref{UB+} and~\ref{UB++}}\label{Sec.ADLP.Main}
We first prove Theorem~\ref{UB+} by induction.
\begin{proof}[Proof of Theorem~\ref{UB+}]
Assume that $k\geq3$. According to Lemma~\ref{Lem.ADLP.0},
$$
\delta(d,k)\leq\delta(1,k) + (d-1)k\mbox{.}
$$

Since $\delta(1,k)=1$, this can be rewritten as
$$
\delta(d,k)\leq{kd}-(k-1)\mbox{.}
$$

As $\lceil{2d/3}\rceil\leq{k-1}$ when $k\geq3$ and $d\leq3$, this inequality yields the desired bound on $\delta(d,k)$ when $d\leq3$.

Assume that $d\geq4$ and $\delta(d-p,k)\leq{k(d-p)-\lceil{2d/3}\rceil}$ whenever $1\leq{p}\leq3$. Consider two vertices $u$ and $v$ of a lattice $(d,k)$-polytope $P$ whose distance in the graph of $P$ is precisely $\delta(d,k)$. By Theorem~\ref{Theo},
$$
d(u,v)\leq \delta(d-p,k)+pk-q\mbox{,}
$$
where $1\leq{p}\leq3$ and $q$ is equal to $1$ when $p=1$ and to $2$ otherwise.

Hence, by induction,
$$
d(u,v)\leq kd-\lceil{2(d-p)/3}\rceil-q\mbox{,}
$$

As $2p/3\leq{q}$ for all the pairs $(p,q)$ considered in this proof, it follows that $d(u,v)\leq kd-{2d/3}$. Since $d(u,v)$ is equal to $\delta(d,k)$ and since these are integer quantities, the desired bound on $\delta(d,k)$ holds.
\end{proof}
%

Theorem~\ref{UB++} relies on the same induction than Theorem~\ref{UB+}. The only difference lies in the way this induction is initialized. 

\begin{proof}[Proof of Theorem~\ref{UB++}]
We first prove assertion $(i)$. Assume that $k\geq 4$. Since $\delta(1,k)=1$, this assertion holds when $d=1$. According to Theorem~\ref{2D}, $\delta(2,k)\leq k$ when $k\geq4$; that is, assertion $(i)$ also holds when $d=2$. By Lemma~\ref{Lem.ADLP.0}, $\delta(3,k)\leq\delta(2,k)+k$. As a consequence, $\delta(3,k)\leq 2k$ when $k\geq 4$. In other words,  assertion $(i)$ further holds when $d=3$. Using Theorem~\ref{Theo} inductively then provides $(i)$ for any $d$.

Now assume that $k=3$ and note that $\delta(1,3)=1$, $\delta(2,3)=4$, and $\delta(3,3)=6$ (see Table~\ref{delta(d.k)}). Consider two vertices  $u$ and $v$ of a lattice $(4,3)$-polytope $P$ such that $d(u,v)=\delta(4,3)$. Invoking Theorem~\ref{Theo} with $d=4$ and $k=3$ yields $\delta(4,3)\leq 8$. Thus, assertions $(ii)$ and $(iii)$ both hold when $d\leq 4$.  Theorem~\ref{Theo} can then be used inductively to prove assertions $(ii)$ and $(iii)$ for any $d$.
\end{proof}

%

\section{Discussion}\label{Sec.ADLP.Conc}
Observe that the term $d/2$ in the bound by Del Pia and Michini, and the term $2d/3$ in our bound are both derived from the expression $(|I|-1)d/|I|$, where $I$ is the set in the statement of Lemma~\ref{Lem.ADLP.1}. The former bound is obtained with $|I|=2$ and the latter with $|I|=3$. A first limitation of the approach is that Lemma~\ref{Lem.ADLP.1} can only be used up to $|I|=3$.  Another limitation comes from Lemma~\ref{Lem.ADLP.1.5} that only deals with lattice polygons. In order to further improve the result obtained with this approach, a similar lemma regarding $3$-dimensional lattice polytopes may be needed.

Table~\ref{delta(d.k)} suggests that the next values of $\delta(d,k)$ to determine are $\delta(d,3)$ when $d\geq5$ and $\delta(3,k)$ when $k\geq4$. One may be able to compute $\delta(3,4)$, $\delta(3,5)$, and $\delta(5,3)$ for which the known lower and upper bounds differ by only one. More precisely $7\leq\delta(3,4)\leq8$, $9\leq\delta(3,5)\leq10$, and $10\leq\delta(5,3)\leq11$. In these three cases, the computational search space can be significantly limited by using the following necessary conditions for the upper bound to be achieved by a given lattice $(d,k)$-polytope $P$:
\begin{enumerate}[label=$(\roman*)$]
\item If $u$ and $v$ are two vertices of $P$ such that $\delta(u,v)=\delta(P)$, then $u_i+v_i=k$ whenever $1\leq{i}\leq{d}$, and the differences between these vertices and their neighbors in the graph of $P$ belong to $\{-1,0,1\}^d$,
\item The intersection of $P$ with any facet of the cube $[0,k]^d$ is, up to an affine transformation, a lattice $(d-1,k)$-polytope of diameter $\delta(d-1,k)$.
\end{enumerate}

Observe that these conditions could also be used for a possible inductive proof of Conjecture \ref{CC} when $k=3$, that is $\delta(d,3)=2d$.

\bibliographystyle{ijmart}
\bibliography{LatticePolytopes}

\providecommand{\MR}{\relax\ifhmode\unskip\space\fi MR }
\providecommand{\MRhref}[2]{%
  \href{http://www.ams.org/mathscinet-getitem?mr=#1}{#2}
}
\providecommand{\href}[2]{#2}
\begin{thebibliography}{10}

\bibitem{AcketaZunic1995}
Dragan Acketa and Jovi\v{s}a \v{Z}uni\'{c}, \textsl{On the maximal number of
  edges of convex digital polygons included into an $m\times{m}$-grid}, Journal
  of Combinatorial Theory A \textbf{69} (1995), 358--368.

\bibitem{AllamigeonBenchimolGaubertJoswig2014}
Xavier Allamigeon, Pascal Benchimol, St{\'e}phane Gaubert, and Michael Joswig,
  \textsl{Long and winding central paths}, arXiv:1405.4161 (2014).

\bibitem{BalogBarany1991}
Antal Balog and Imre B{\'a}r{\'a}ny, \textsl{On the convex hull of the integer
  points in a disc}, Proceedings of the Seventh Annual Symposium on
  Computational Geometry, 1991, pp.~162--165.

\bibitem{BonifasDiSummaEisenbrandHahnleNiemeier2014}
Nicolas Bonifas, Marco~Di Summa, Friedrich Eisenbrand, Nicolai H\"{a}hnle, and
  Martin Niemeier, \textsl{On sub-determinants and the diameter of polyhedra},
  Discrete and Computational Geometry \textbf{52} (2014), 102--115.

\bibitem{BorgwardtDeLoeraFinhold2016}
Steffen Borgwardt, Jes{\'u}s~De Loera and Elisabeth Finhold, \textsl{The
  diameters of transportation polytopes satisfy the {H}irsch conjecture},
  arXiv:1603.00325 (2016).

\bibitem{DelPiaMichini2016}
Alberto {Del Pia} and Carla Michini, \textsl{On the diameter of lattice
  polytopes}, Discrete and Computational Geometry \textbf{55} (2016), 681--687.

\bibitem{DezaManoussakisOnn2015}
Antoine Deza, George Manoussakis and Shmuel Onn, \textsl{Primitive lattice
  polytopes}, arXiv:1512.08018 (2015).

\bibitem{KalaiKleitman1992}
Gil Kalai and Daniel Kleitman, \textsl{A quasi-polynomial bound for the
  diameter of graphs of polyhedra}, Bulletin of the American Mathematical
  Society \textbf{26} (1992), 315--316.

\bibitem{KleinschmidtOnn1992}
Peter Kleinschmidt and Shmuel Onn, \textsl{On the diameter of convex
  polytopes}, Discrete Mathematics \textbf{102} (1992), 75--77.

\bibitem{Naddef1989}
Dennis Naddef, \textsl{The {H}irsch conjecture is true for $(0,1)$-polytopes},
  Mathematical Programming \textbf{45} (1989), 109--110.

\bibitem{NaddefPulleyblank1984}
Dennis Naddef and William Pulleyblank, \textsl{Hamiltonicity in
  $(0,1)$-polyhedra}, Journal of Combinatorial Theory B \textbf{37} (1984),
  41--52.

\bibitem{Santos2012}
Francisco Santos, \textsl{A counterexample to the {H}irsch conjecture}, Annals
  of Mathematics \textbf{176} (2012), 383--412.

\bibitem{Sukegawa2016}
Noriyoshi Sukegawa, \textsl{Improving bounds on the diameter of a polyhedron in
  high dimensions}, arXiv:1604.04338 (2016).

\bibitem{Thiele1991}
Torsten Thiele, \textsl{Extremalprobleme f{\"u}r {P}unktmengen}, Master's
  thesis, Freie Universit{\"a}t Berlin, 1991.

\bibitem{Todd2014}
Michael Todd, \textsl{An improved kalai{-}kleitman bound for the diameter of a
  polyhedron}, SIAM Journal on Discrete Mathematics \textbf{28} (2014),
  1944--1947.

\bibitem{Ziegler1995}
G{\"u}nter Ziegler, \textsl{Lectures on polytopes}, Graduate Texts in
  Mathematics, vol. 152, Springer, 1995.

\end{thebibliography}

\end{document}